\newlength{\theorempostskipamount}
\newenvironment{theorem}[1][]
{\paragraph{Theorem} {\normalfont #1} \it}
{\vspace{\the\theorempostskipamount}}
\def\theorem{\@ifnextchar[{\@theoremopt}{\@theoremplain}}
\def\@theoremplain{\paragraph{Theorem} \it}
\def\@theoremopt[#1]{\paragraph{Theorem \normalfont #1}  \it}
\newenvironment{lemma}[1][]
{\paragraph{Lemma} {\normalfont #1} \it}
{\vspace{\the\theorempostskipamount}}
\def\lemma{\@ifnextchar[{\@lemmaopt}{\@lemmaplain}}
\def\@lemmaplain{\paragraph{Lemma} \it}
\def\@lemmaopt[#1]{\paragraph{Lemma \normalfont #1}  \it}
\newenvironment{proposition}[1][]
{\paragraph{Proposition} {\normalfont #1} \it}
{\vspace{\the\theorempostskipamount}}
\def\proposition{\@ifnextchar[{\@propositionopt}{\@propositionplain}}
\def\@propositionplain{\paragraph{Proposition} \it}
\def\@propositionopt[#1]{\paragraph{Proposition \normalfont #1}  \it}
\def\definition{\@ifnextchar[{\@definitionopt}{\@definitionplain}}
\def\@definitionplain{\paragraph{Definition} \it}
\def\@definitionopt[#1]{\paragraph{Definition \normalfont #1}  \it}
\newenvironment{corollary}[1][]
{\paragraph{Corollary} {\normalfont #1} \it}
{\vspace{\the\theorempostskipamount}}
\def\corollary{\@ifnextchar[{\@corollaryopt}{\@corollaryplain}}
\def\@corollaryplain{\paragraph{Corollary} \it}
\def\@corollaryopt[#1]{\paragraph{Corollary \normalfont #1}  \it}
\def\question{\@ifnextchar[{\@questionopt}{\@questionplain}}
\def\@questionplain{\paragraph{Question} \it}
\def\@questionopt[#1]{\paragraph{Question \normalfont #1}  \it}
\def\problem{\@ifnextchar[{\@problemopt}{\@problemplain}}
\def\@problemplain{\paragraph{Problem} \it}
\def\@problemopt[#1]{\paragraph{Problem \normalfont #1}  \it}
\def\conjecture{\@ifnextchar[{\@conjectureopt}{\@conjectureplain}}
\def\@conjectureplain{\paragraph{Conjecture} \it}
\def\@conjectureopt[#1]{\paragraph{Conjecture \normalfont #1}  \it}
\newenvironment{remark}[1][]
{\paragraph{Remark} {\normalfont #1}}
{\vspace{\the\theorempostskipamount}}
\def\remark{\@ifnextchar[{\@remarkopt}{\@remarkplain}}
\def\@remarkplain{\paragraph{Remark}}
\def\@remarkopt[#1]{\paragraph{Remark \normalfont #1}}
\def\remarks{\@ifnextchar[{\@remarksopt}{\@remarksplain}}
\def\@remarksplain{\paragraph{Remarks} \it}
\def\@remarksopt[#1]{\paragraph{Remarks \normalfont #1}  \it}
\def\example{\@ifnextchar[{\@exampleopt}{\@exampleplain}}
\def\@exampleplain{\paragraph{Example}}
\def\@exampleopt[#1]{\paragraph{Example \normalfont #1}}
\def\examples{\@ifnextchar[{\@examplesopt}{\@examplesplain}}
\def\@examplesplain{\paragraph{Examples} \it}
\def\@examplesopt[#1]{\paragraph{Examples \normalfont #1}  \it}
\def\exercise{\@ifnextchar[{\@exerciseopt}{\@exerciseplain}}
\def\@exerciseplain{\paragraph{Exercise} \it}
\def\@exerciseopt[#1]{\paragraph{Exercise \normalfont #1}  \it}
\def\notation{\@ifnextchar[{\@notationopt}{\@notationplain}}
\def\@notationplain{\paragraph{Notation} \it}
\def\@notationopt[#1]{\paragraph{Notation \normalfont #1}  \it}
\def\convention{\@ifnextchar[{\@conventionopt}{\@conventionplain}}
\def\@conventionplain{\paragraph{Convention} \it}
\def\@conventionopt[#1]{\paragraph{Convention \normalfont #1}  \it}
\def\warning{\@ifnextchar[{\@warningopt}{\@warningplain}}
\def\@warningplain{\paragraph{Warning} \it}
\def\@warningopt[#1]{\paragraph{Warning \normalfont #1}  \it}
\def\theoreme{\@ifnextchar[{\@theoremeopt}{\@theoremeplain}}
\def\@theoremeplain{\paragraph{Théorème} \it}
\def\@theoremeopt[#1]{\paragraph{Théorème {\normalfont #1}}  \it}
\def\lemme{\@ifnextchar[{\@lemmeopt}{\@lemmeplain}}
\def\@lemmeplain{\paragraph{Lemme} \it}
\def\@lemmeopt[#1]{\paragraph{Lemme {\normalfont #1}}  \it}
\newenvironment{de'finition}[1][]
{\paragraph{Définition} {\normalfont #1} \it}
{\vspace{\the\theorempostskipamount}}
\def\exemple{\@ifnextchar[{\@exempleopt}{\@exempleplain}}
\def\@exempleplain{\paragraph{Exemple}}
\def\@exempleopt[#1]{\paragraph{Exemple {\normalfont #1}}}
\newcommand{\thmendspace}{\vspace{\the\theorempostskipamount}}
\newlength{\negvcorr}
\newlength{\mnegvcorr}
\newenvironment{proof}[1][Proof]{\bigskip\noindent \textit{#1.~}}
{\hfill $\Box$}
\def\@removefromreset#1#2{\let\@tempb\@elt
   \def\@tempa#1{@&#1}\expandafter\let\csname @*#1*\endcsname\@tempa
   \def\@elt##1{\expandafter\ifx\csname @*##1*\endcsname\@tempa\else
         \noexpand\@elt{##1}\fi}%
   \expandafter\edef\csname cl@#2\endcsname{\csname cl@#2\endcsname}%
   \let\@elt\@tempb
   \expandafter\let\csname @*#1*\endcsname\@undefined}
\let\c@equation\c@subparagraph
\renewcommand{\theparagraph}{(\arabic{section}.\arabic{paragraph})}
\renewcommand{\thesubparagraph}
{(\arabic{section}.\arabic{paragraph}.\arabic{subparagraph})}
\titleformat{\part}[display]{\normalfont\Large\bfseries}%
{\partname}{0cm}{}
\titleformat{\section}[hang]{\normalfont\Large\bfseries}{}{0cm}%
{\thesection \  --\ }
\titleformat{\subsection}[hang]{\normalfont\large\bfseries}{}{0cm}%
{\thesubsection \  --\ }
\newcommand{\spcifnec}[1]
{\ifx#1\empty
\else ~#1.
\fi}
\titleformat{\paragraph}[runin]{\normalfont\bfseries}
{\theparagraph}{0cm}{\spcifnec}
\titlespacing{\paragraph}{0cm}
{2.75ex plus 1ex minus .2ex}
{.5em}
\titleformat{\subparagraph}[runin]{\it}
{\thesubparagraph}{0cm}{\spcifnec}
\titlespacing{\subparagraph}{0cm}
{0mm}
{.5em}
\let\coresubpar\subparagraph
\def\subparagraph{\@ifnextchar[{\@varsubpar}{\coresubpar}}
\def\@varsubpar[]#1{\coresubpar{}{\it \ \kern -.45em #1}}
\let\intersubpar\subparagraph
\def\subparagraph{\@ifnextchar*{\@starredsubpar}{\intersubpar}}
\def\@starredsubpar*{\@ifnextchar[{\@varstarredsubpar}{\@plainstarredsubpar}}
\def\@varstarredsubpar[]#1{\par\noindent {\it #1}}
\def\@plainstarredsubpar#1{\par\noindent {\it #1.}}
\let\corepar\paragraph
\def\paragraph{\@ifnextchar[{\@varpar}{\corepar}}
\def\@varpar[]#1{\corepar{}{\bf \ \kern -.45em #1}}
\let\interpar\paragraph
\def\paragraph{\@ifnextchar*{\@starredpar}{\interpar}}
\def\@starredpar*{\@ifnextchar[{\@varstarredpar}{\@plainstarredpar}}
\def\@varstarredpar[]#1{\bigskip\par\noindent {\bf #1}}
\def\@plainstarredpar#1{\bigskip\par\noindent {\bf #1.}}
\titleformat{\section}[hang]{\normalfont\large\bfseries}{}{0cm}{}
\titleformat{\subsection}[hang]{\normalfont\bfseries}{}{0cm}{}
\renewcommand{\theparagraph}{(\Alph{paragraph})}
\newenvironment{closing}%
{\titleformat{\section}[hang]{\normalfont\large\bfseries}{}{0cm}{}
\setlength{\itemsep}{0mm}
\small
}
{}
\renewcommand\@maketitle{%
  \newpage
  \begin{center}%
  \let \footnote \thanks
    {\Large \bf \@title \par}%
    \vskip 1em%
    {\large
      \begin{tabular}[t]{c}%
        \@author
      \end{tabular}\par}%
  \end{center}%
  \par
  \vskip 1.5em}
\renewenvironment{abstract}
{\small \quotation
\noindent {\bf Abstract.}}{\endquotation \vskip 1cm}
\newcommand{\C}{\mathbf{C}}
\newcommand{\Z}{\mathbf{Z}}
\newcommand{\N}{\mathbf{N}}
\renewcommand{\P}{\mathbf{P}}
\newcommand{\PH}{\mathbf{P}\kern -.05em \mathrm{H}}
\newcommand{\F}{\mathbf{F}}
\renewcommand{\O}{\mathcal{O}}
\newcommand{\I}{\mathcal{I}}
\renewcommand{\k}{\mathbf{k}}
\newcommand{\M}{\mathcal{M}}
\newcommand{\K}{\mathcal{K}}
\newcommand{\Cliff}{\mathrm{Cliff}}
\newcommand{\Ext}{\mathrm{Ext}}
\newcommand{\Pic}{\mathrm{Pic}}
\newcommand{\red}{\mathrm{red}} 
\newcommand{\PGL}{\mathrm{PGL}}
\newcommand{\cHom}{{\cal H}\kern -.08em om} 
\newcommand{\cExt}{{\cal E}\kern -.1em xt} 
\newcommand{\vect}[1]{\langle #1 \rangle} 
\DeclareMathOperator{\cork}{cork}
\DeclareMathOperator{\coker}{coker}
\DeclareMathOperator{\expdim}{expdim}
\newcommand{\dlbrack}{[ \kern -.4ex [}
\newcommand{\drbrack}{] \kern -.4ex ]}
\newcommand{\trsp}[1]{\vphantom{#1}^{\mathsf T\!} #1}
\newcommand{\restr}[2]{\left. #1 \right| _{#2}}
\def\@orthpar[#1]{(#1)^\perp}
\def\@orthst#1{#1^\perp}
\def\orth{\@ifnextchar[{\@orthpar}{\@orthst}}
\def\@dualpar[#1]{(#1)^\vee}
\def\@dualst#1{#1^\vee}
\def\dual{\@ifnextchar[{\@dualpar}{\@dualst}}
\newcommand{\bx}{\mathbf x}
\renewcommand{\epsilon}{\varepsilon}
\renewcommand{\geq}{\geqslant}
\renewcommand{\leq}{\leqslant}
\def\subset{\subseteq}
\renewcommand{\emptyset}{\varnothing}
\newcommand{\ie}{i.e.,\ } 
\newcommand{\cf}{cf.\ } 
\newcommand{\eg}{e.g.,\ }
\newcommand{\noeud}{n{\oe}ud}
\newcommand{\noeuds}{n{\oe}uds}
\def\noeud{\@ifnextchar.{n{\oe}ud}{\@ifnextchar,{n{\oe}ud}{n{\oe}ud\ }}}
\def\noeuds{\@ifnextchar.{n{\oe}uds}{\@ifnextchar,{n{\oe}uds}{n{\oe}uds\ }}}
\def\?{?\kern -.08em ?}
\def\wtf{?\kern -.08em !}
\newcommand{\KC}{\mathcal{KC}}
\renewcommand{\F}{\mathcal{F}}
\newcommand{\FS}{\mathcal{FS}}
\let\Zint\Z 
\def\VW#1{\mathcal{V\kern -.1em W}_{#1}}
\def\V#1{\mathcal{V}_{#1}}
\let\ZZ\Z
\def\Z#1{\mathcal{Z}_{#1}}
\def\W#1{\mathcal{W}_{#1}}
\def\sKC{K\kern -.15em C}
\let\CC\C
\def\C#1{\mathcal{C}_{#1}}
\let\SS\S
\def\S{\mathcal{S}}
\title{Double covers and extensions}
\author{Ciro Ciliberto and Thomas Dedieu}
\begin{document}

\setdefaultenum{(i)}{}{}{}
\setdefaultitem{---}{}{}{}

\maketitle

\begin{abstract}
In this paper we consider double covers of the projective space in
relation with the problem of extensions of varieties, specifically of
extensions of canonical curves to $K3$ surfaces and Fano 3-folds. In
particular we consider $K3$ surfaces which are double covers of the
plane branched over a general sextic: we prove that the general curve
in the linear system pull back of plane curves of degree $k\geq 7$
lies on a unique $K3$ surface. If $k\leq 6$ the general such curve is
instead extendable to a higher dimensional variety. In the cases
$k=4,5,6$, this gives the existence of singular index $k$ Fano
varieties of dimensions 8, 5, 3, genera 17, 26, 37,
and indices 6, 3, 1
respectively. For $k = 6$ we recover the Fano variety $\P(3, 1, 1,
1)$, one of only two Fano threefolds with canonical Gorenstein
singularities with the maximal genus 37, found by Prokhorov.
We show that the latter variety is no further extendable.
For $k=4$ and $5$ these Fano varieties have been identified by Totaro.
We also study the
extensions of smooth degree 2 sections of $K3$ surfaces of genus 3.
In all these cases, we compute the co-rank of the Gauss--Wahl maps of
the curves under consideration.  Finally we observe that linear
systems on double covers of the projective plane provide superabundant
logarithmic Severi varieties.
\end{abstract}


Let $\pi: V \to \P^n$ be the double cover branched over a smooth
degree $2d$ hypersurface $B$, and $L$ be the pull-back to $V$ of the
hyperplane class. For $k\geq d$, the general membre $W$ of $|kL|$ is
the normalization of a degree $2k$ hypersurface $W^\flat$ in $\P^n$
everywhere 
tangent to $B$, with an ordinary double singularity along a smooth
complete intersection $Z$ of type $(k,k-d)$, and such that there
exists a degree $k$ hypersurface containing $Z$ and cutting out on $B$
its contact locus with $W^\flat$.
Our main observation in this article is the quite surprising fact that
given $W^\flat$ with an ordinary double singularity along the smooth
complete intersection $Z$, there always exists $B$ fulfilling all the
other requirements,
so that $W^\flat$ lifts to the corresponding double cover of $\P^n$
(see Section~\ref{S:main} and in particular
Proposition~\ref{pr:everywhere-tg}).

This gives us the possibility to describe smooth
degree $k$ sections of $K3$ surfaces of genus $2$
(sextic double planes)
and to study their extensions.
In particular we compute the coranks of their Gauss--Wahl
maps 
(see subsection~\ref{S:extensions} for the relevant definitions).
More precisely,
let $\K_2$ be the moduli stack of primitively polarized $K3$ surfaces
$(S,L)$ of 
genus $2$, and $\KC _{k^2+1}^k$ the moduli stack of triples $(S,L,C)$
such that $[S,L] \in \K_2$ and $C$ is a smooth membre of the linear
system $|kL|$.
One has the forgetful map
$c_{k^2+1} ^k: [S,L,C] \in \KC _{k^2+1}^k \mapsto [C] \in
\M_{k^2+1}$.
We determine the dimension of the general fibre of $c_{k^2+1} ^k$, and when
$k>3$ this gives us $\cork(\Phi_C)$
(the co-rank of the Gauss--Wahl map of $C$,
see subsection~\ref{S:extensions}) 
for the general $C$ in the image
of $c_{k^2+1}^k$ by \cite[Thm.~2.6]{cds}; when $k=3$ (resp.\ $k\leq 2$),
$C$ is a plane sextic (resp.\ hyperelliptic) and the relevant
cohomological invariants were 
already known. This is stated in Theorem~\ref{t:g2}.
The coranks of the Gauss--Wahl maps had been found previously in
\cite{CLM98}, but the values given in \cite[Table~2.14]{CLM98} are
wrong for $k=4,5,6$,
as has first been pointed out by Totaro \cite[Ex.~5.2]{totaro0};
see the corrigendum to \cite{CLM98} for details.
The approach in \cite{CLM98} is completely different, purely
cohomological. 

We find that for $k \geq 7$ the general curve in the image of $c_{k^2+1}^k$
lies on a 
unique $K3$ surface, hence it is extendable only one step;
this may also be seen from the results in \cite{CLM98}.
For $k\leq 6$ however, we find that $c_{k^2+1}^k$ has positive dimensional
fibres, hence the general curve in its image is extendable to a higher
dimensional variety  by \cite{cds}. For $k=4,5,6$ respectively, this
gives the 
existence of singular Fano varieties of dimensions $8,5,3$, genera
$17,26,37$ and indices $4,5,6$ respectively.
For $k=6$ this turns out to be $\P(3,1,1,1)$, one of only two Fano
threefolds with canonical Gorenstein singularities with the maximal
genus $37$, as has been proved by Prokhorov \cite{prokhorov}.
Our results show that $\P(3,1,1,1)$ is not extendable.
For $k=4$ and $5$, the Fano varieties have been identified by Totaro
as sextic hypersurfaces in weighted projective spaces, see
\ref{p:totaro}.

As another application of our main observation, we study
in Section~\ref{S:double-Fano} the
extensions of smooth degree $2$ sections of $K3$ surfaces of genus
$3$. Indeed a general $K3$ surface of genus $3$ is a smooth quartic
$S$, hence it may be realized as an anticanonical divisor of various
double covers $V$ of $\P^3$ branched over a quartic; the linear
curve sections of $V$ in its anticanonical embedding are complete
intersections of type $(2,4)$ in $\P^3$.
In this case the results of \cite{cds} do not apply because the curves
under consideration have Clifford index $2$, however we compute the
relevant cohomological invariants by hand (Proposition~\ref{pr:cohom})
and observe that they fit with our description of the extensions.

Our main result in this article is also applied in \cite{th-5iche} to
analyze the extensions of plane quintics in their canonical model.
In the follow-up article \cite{cd-highind} we give a systematic
description of all the maps $c_g^k: \KC _g ^k \to \M_k$ with $k>1$
that have positive-dimensional general fibre (see the notation in
subsection~\ref{S:extensions}).

In Section~\ref{S:severi} we make an observation of a different nature,
namely that linear systems on double covers of the projective plane
provide superabundant logarithmic Severi varieties. The actual
dimension exceeds the expected dimension by the geometric genus of the
double cover. We guess that there should be a conceptual explanation
of this fact, but couldn't find it.

\bigskip
\noindent
\textbf{Thanks.}
This article grew out of a suggestion of Edoardo Sernesi that there
should be a relation between Prokhorov's extremal Fanos
\cite{prokhorov} and the Donagi--Morrison example
\cite{donagi-morrison}; we thank him for his constant help and
interest. ThD benefited from conversations with Justin
Sawon which have been of great importance in the development of this
project.
We are grateful to Burt Totaro for sharing with us the Fano varieties
that he identified as the universal extensions of $K3$ double sextic
planes in the cases $k=4$ and $5$.
We also thank Jason Starr, who spotted a flaw in the first version of
this article, in the proof of our main theorem.
Eventually we thank the referee for his careful reading of this text.


\section{Notation and background}

\subsection{General notation and convention}

We work over the field of complex numbers.

We call $H$ the hyperplane
class of the projective space $\P^n$, and use $h(d)$ as a shorthand
for $h^0(\P^n, dH)$.

For $A$ and $B$
two linearly equivalent, distinct and effective Cartier divisors
on a projective variety $X$, we
denote by $\vect {A,B}$ the pencil generated by $A$ and $B$.

\subsection{Extensions and ribbons}
\label{S:extensions}

We use freely troughout the text the notions of extensions, ribbons,
etc. They are carefully treated in \cite{cds}, but we include here a
short reminder for the reader's convenience.
The material in this subsection is not involved in the proof of our
main result Theorem~\ref{t:main}; we use it for the applications in
Sections \ref{S:K3-g2} and \ref{S:double-Fano}.

\paragraph{}
A projective variety $X \subset \P^n$ is \emph{extendable} $r$ steps
if there exists a projective variety $Y \subset \P^{n+r}$, not a cone,
and having $X$ as a linear section. The variety $Y$ is then called an
($r$ steps) \emph{extension} of $X$.

It has been proved by Lvovski \cite{lvovskiII} that the extendability
of $X \subset \P^n$ is controlled by the invariant
\[
\alpha(X) = h^0(N_{X/\P^n}(-1))-n-1,
\]
namely if $X$ is smooth and irreducible, not contained in a
hyperplane, and not a quadric, and if $\alpha(X)<n$, then $X$ is
extendable at most $\alpha(X)$ steps.

When $X$ is a canonical curve $C \subset \P^{g-1}$ of genus $g$,
one has
\[
  \alpha(C) = \cork (\Phi_C),
\]
the co-rank of the Gauss--Wahl map
$\Phi_C : \bigwedge^2 H^0(C,K_C) \to H^0(C,3K_C)$ which is defined by
linearity and the relations $s \wedge t \mapsto s\cdot dt - t \cdot
ds$.
When $X$ is a linearly normal $K3$ surface $S \subset \P^g$, one has
\[
  \alpha(S) = h^1(T_S(-1)).
\]

\paragraph{}
If $X\subset \P^n$ is either a canonical curve $C \subset \P^{g-1}$ or
a linearly normal $K3$ surface $S \subset \P^g$ of genus $g \geq 11$ and
Clifford index $>2$, then there holds the following strong converse to
Lvovski's Theorem, see \cite{abs1} and \cite{cds}: there exists $Y
\subset \P^{n+\alpha(X)}$ an $\alpha(X)$ steps extension of $X$, such
that every $1$ step extension $X' \subset \P^{n+1}$ (up to
projectivities of $\P^{n+1}$ leaving $X$ fixed) appears in a
unique way as a linear section of $Y$ containing $X$. 
We call $Y$ the \emph{universal extension} of $X$.

In particular, under the above assumptions the $1$ step extensions of
$X \subset \P^n$ fit in a universal family parametrized by a
projective space of 
dimension $\alpha(X)-1$.

\paragraph{}
Let us denote by:\\
\begin{inparaitem}
\item $\M_g$ the moduli stack of smooth curves of genus $g$;\\
\item  $\K_g^k$ the moduli stack of polarised $K3$ surfaces $(S,L)$ of
  genus $g$ such that $L$ has divisibility exactly $k$,
\ie $S$ is a smooth $K3$ surface, and $L$ is an ample,
globally generated
line bundle on $S$ with $L^ 2=2g-2$, such that $L=kL'$ with $L'$ a
primitive line bundle; \\
\item $\KC_g^k$
the moduli stack of pairs $(S,C)$
with $C$ a smooth curve on $S$ and $[S,\O_S(C)]\in \K_g^k$;\\
\item $c_g^k: \KC_g^k\to \M_g$ the forgetful map
$[S,C] \mapsto [C]$.
\end{inparaitem}
\\
If $g\geq 11$ and $[S,C] \in \KC_g^k$ is such that
$\Cliff(C)>2$, one has
\[
\dim \bigl( (c_g^k)^{-1}(C) \bigr) = \cork(\Phi_C)-1,
\]
see \cite[Thm.~2.6]{cds}.
We shall also consider:\\
\begin{inparaitem}
\item $\F_g ^k$ the moduli stack of Fano threefolds $V$ of genus $g$
  and index $k$, \ie
smooth varieties $V$ with $-K_V$ ample equal to $kL'$ for some
primitive line bundle $L'$, and $K_V^ 3=2-2g$;\\
\item $\FS_g^k$ the moduli stack of pairs $(V,S)$ with $V\in \F_g^k$ and
$S\in |-K_V|$ a smooth surface;\\
\item $s_g^k: \FS_g^k\to \K_g$ the
forgetful map $[V,S] \mapsto [S]$ (with $\K_g$ the union of all
$\K_g^k$, $k>0$).
\end{inparaitem}
\\
Again it holds that if $g\geq 11$ and $\Cliff(S)>2$ then the fibre of
$s_g^k$ over $S$ has dimension $h^1(T_S(-1))-1$
\cite[Thm.~2.19]{cds}, but we will not use
this in this text.

\paragraph{}
A \emph{ribbon} over $X\subset \P^n$ (a projective variety, as always)
with normal bundle $\O_X(1)$ is a scheme 
$\tilde X$ such that $\tilde X _\red =X$,
$\I_{X/\tilde{X}}^2=0$,
and $\I_{X/\tilde{X}} = \O_X(-1)$.
These ribbons are uniquely determined by their extension classes
$e _{\tilde X} \in \Ext^1(\Omega^1_X,\O_X(-1))$, and two ribbons are
isomorphic if and only if their extension classes are proportional.

If $X$ is smooth and $Y \subset \P^{n+1}$ is an extension of $X$, then
the first infinitesimal neighbourhood of $X$ in $Y$ is a ribbon over
$X$ with normal bundle $\O_X(1)$, which we denote by $2X_Y$.
In this case we say that $Y$ is an \emph{integral} of the ribbon
$2X_Y$. The extension class of the ribbon $2X_Y$ lies in the kernel of
the map
\[
\eta: \Ext^1(\Omega^1_X,\O_X(-1)) \to 
\Ext^1(\restr {\Omega^1_{\P^n}} X,\O_X(-1))
\]
induced by the conormal exact sequence of $X \subset \P^n$
\cite[Lem.~4.4]{cds}.

When $X \subset \P^n$ is a canonical curve $C \subset \P^{g-1}$
(resp.\ a linearly normal $K3$ surface $S \subset \P^g$) the map
$\eta$ is $\trsp \Phi_C$ (resp.\ $0$).
It follows that $\coker(\Phi_C)$ (resp.\ $H^1(T_S(-1))$) parametrizes
those ribbons likely to be integrated to an extension of $X$.
The central point in \cite{abs1} and \cite{cds} is that when
$g\geq 11$ and $\Cliff>2$, each such ribbon may be integrated in a unique
way to an extension of $X$ (up to projectivities leaving $X$ fixed).

\paragraph{}
Under suitable assumptions, and in particular if $X \subset \P^n$ is
either a canonical curve or a linearly normal $K3$ surface, one may
interpret $H^0(N_{X/\P^n}(-1))$ as parametrizing all possible
embeddings in $\P^{n+1}$ of those ribbons over $X$ with extension
class in $\ker(\eta)$ (see \cite[\SS 3]{cds}). Note that two such
embeddings of the same ribbon differ by a projectivity of $\P^{n+1}$
leaving each point of $X$ fixed; these projectivities form the group
of homotheties and translations of the $n$-dimensional affine space,
and amount for the difference between 
$h^0(N_{X/\P^n}(-1))$ and $\alpha(X)$.

In general it is not true that the integral of a ribbon is unique, and
indeed we shall see examples of this in the present text, which
already appeared in \cite{beauville-merindol}.
We expect that in this case (under suitable assumptions) the
extensions of $X$ with fixed first infinitesimal neighbourhood are
controlled by
$\bigoplus _{l\geq 2} H^0(N_{X/\P^n}(-l))$.
More generally we expect that given a $k$-th order ribbon $\tilde X$
(where we take the order of an ordinary ribbon to be $2$)
over $X$ with normal bundle $\O_X(1)$ embedded in $\P^{n+1}$,
the Hilbert scheme of extensions of $X$ containing
$\tilde X$, if nonempty, is of dimension
$\sum _{l\geq k} h^0(N_{X/\P^n}(-l))$.

These expectations are verified in the situations we consider in this
article, see Theorems \ref{t:g2} and \ref{t:double-fano}.

\section{Divisors on double covers of the projective space}
\label{S:double-covers}

\paragraph{}
\label{p:double-covers}
We begin by reviewing some elementary facts about double covers.
Let $d$ be a positive integer. We consider a degree $2d$ hypersurface
$B$ in $\P^n$, and $\pi: V \to \P^n$ the double cover branched over
$B$. 
Let $H$ be the hyperplane class on
$\P^n$, and $L$ be its pull-back to $V$.
For all $k\in \N$ we have
\begin{equation}\label{eq:decomp:kL}
H^0(V,kL) = 
\pi^* H^0(\P^n, kH) \oplus \pi^* H^0(\P^n, kH-{\textstyle\frac 1
  2}B),
\end{equation}
which is the isotypic decomposition of 
$H^0(V,kL)$ as a representation of $\ZZ/2$.
The first summand corresponds to divisors that are double covers of
degree $k$ hypersurfaces in $\P^n$, and the second to divisors that
decompose as $B$ (seen as the ramification divisor in $V$) plus the
double cover of a degree $k-d$ hypersurface in $\P^n$.

\begin{proposition}\label{p:descr}
For $k\geq d$, the general member $W$ of $|kL|$ is not a double cover
of some hypersurface in $\P^n$, the restriction $\restr \pi W$ is
birational on 
its image, a degree $2k$ hypersurface $W^\flat$ in $\P^n$ everywhere
tangent to $B$, with an ordinary double singularity along a smooth
complete intersection $Z$ of type $(k,k-d)$, and such that there
exists a degree $k$ hypersurface containing $Z$ and cutting out on $B$
its contact locus with $W^\flat$.
\end{proposition}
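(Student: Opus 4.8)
The plan is to reduce the whole statement to one computation with the section defining $W$. Write $B=\{f=0\}$ with $\deg f=2d$, realize $V$ in the total space of $\O_{\P^n}(d)$ as $\{w^2=\pi^*f\}$ with $w$ the tautological section cutting out the ramification divisor $R$, and let $\sigma$ denote the covering involution, so that $\sigma^*w=-w$. By \eqref{eq:decomp:kL} a general defining section of $W$ reads $s=\pi^*a+w\,\pi^*b$ with $a\in H^0(\P^n,kH)$ and $b\in H^0(\P^n,(k-d)H)$ general, in particular both nonzero. Since $\sigma^*s=\pi^*a-w\,\pi^*b$ and $w^2=\pi^*f$, the product is
\[
s\cdot\sigma^*s=\pi^*\bigl(a^2-f\,b^2\bigr),
\]
which exhibits $\pi^{-1}(W^\flat)=W\cup\sigma(W)$ and identifies $W^\flat=\{a^2-fb^2=0\}$. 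As $V$ is smooth and irreducible and $|kL|=|\pi^*(kH)|$ is base-point-free with $L$ ample, a general $W$ is smooth and irreducible (Bertini plus connectedness); hence $W^\flat$ is irreducible of degree exactly $2k$.

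I would next settle the first two assertions at once. A member of $|kL|$ is $\sigma$-invariant only if $s,\sigma^*s$ are proportional, i.e.\ $ab=0$; as $a,b\neq0$ the divisor $W$ is neither $\pi^*\{a=0\}$ nor $R+\pi^*\{b=0\}$, so it is not the pull-back of a hypersurface, proving it is not a double cover. Consequently $W\neq\sigma(W)$, and since $\pi$ has degree $2$ the map $\pi|_W$ has degree $1$ onto $W^\flat$, i.e.\ it is birational; concretely, over a general point of $W^\flat$ the two preimages correspond to the two signs in $a=\pm\sqrt f\,b$, and only the sheet lying on $W$ is selected. Being a finite birational morphism from the smooth (hence normal) $W$, the map $\pi|_W$ is the normalization of $W^\flat$.

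For the tangency and the contact locus, restrict the equation of $W^\flat$ to $B$: since $f|_B=0$ one is left with $a^2|_B$, so scheme-theoretically $W^\flat\cap B=2\,(A\cap B)$ with $A:=\{a=0\}$ of degree $k$. This is precisely everywhere-tangency of $W^\flat$ to $B$ along the contact locus $A\cap B$; and $A$ is a degree $k$ hypersurface containing $Z:=\{a=b=0\}$ and cutting out $A\cap B$ on $B$, which is the last clause of the statement. For the singular locus I would use the Jacobian $\partial_iF=2a\,\partial_ia-b^2\,\partial_if-2fb\,\partial_ib$: on $B$ the relation $F|_B=a^2$ forces $a=0$, after which $\partial_iF=-b^2\partial_if$, and $\nabla f\neq0$ along the smooth $B$ forces $b=0$; while off $B$ the étale map $\pi$ sends the smooth $W\cup\sigma(W)$ to $W^\flat$, which is then singular only along $\pi(W\cap\sigma(W))$, and $W\cap\sigma(W)=\{\pi^*a=0,\ w\,\pi^*b=0\}$ lies over $Z$ away from $B$. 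Hence $\Sing(W^\flat)=Z$, which for general $a,b$ is a smooth complete intersection of type $(k,k-d)$ by Bertini (for $k=d$ it is empty and $W^\flat$ is smooth).

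The remaining and most delicate point is the \emph{type} of the singularity along $Z$. At a general point of $Z$ (hence off $B$) the local factorization $a^2-fb^2=(a-\sqrt f\,b)(a+\sqrt f\,b)$ presents two smooth branches meeting transversally, because $da,db$ are independent along the smooth $Z$ while $f\neq0$: this is an ordinary double point, matching that $\pi|_W$ is $2$-to-$1$ over $Z\setminus B$. The subtlety is along $Z\cap B$, where $\sqrt f$ degenerates: for general $a,b$ the scheme $Z\cap B$ is a smooth complete intersection of type $(k,k-d,2d)$, so $a,b,f$ extend to a local coordinate system and the equation becomes exactly $x_1^2-x_2^2x_3=0$, a pinch point (Whitney umbrella), compatibly with $\pi|_W$ being $1$-to-$1$ and ramified there. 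Thus the generic transverse singularity along $Z$ is an ordinary double point, degenerating to pinch points precisely where the double locus meets the branch divisor; in particular for $n=2$ one has $Z\cap B=\emptyset$ and $W^\flat$ carries only nodes. I expect this local normal-form analysis along $Z$, together with the exact determination $\Sing(W^\flat)=Z$, to be the main technical work.
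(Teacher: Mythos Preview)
Your proof is correct and follows essentially the same approach as the paper: both identify $W^\flat$ by the equation $a^2-fb^2=0$ (the paper phrases this via the pencil $\langle 2A,\,B+2D\rangle$, you via the product $s\cdot\sigma^*s$, and the paper in fact writes down exactly your equation in the paragraph immediately following the proposition), and read off the tangency along $A\cap B$ and the double locus $Z=A\cap D$ from this form. Your treatment is more thorough than the paper's terse argument---in particular your observation that the generic ordinary double points along $Z$ degenerate to pinch points along $Z\cap B$ (so that ``ordinary double along $Z$'' must be read generically when $n\geq 3$) is a genuine refinement that the paper does not address.
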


\begin{proof}
The divisor $W$ belongs to a unique pencil $\vect {A', B+D'}$,
with $A'$ and $D'$ the double covers of hypersurfaces $A$ and $D$ in
$\P^n$ of respective degrees $k$ and $k-d$.
Thus $W^\flat :=\pi(W)$ belongs to the pencil 
$\vect {2A, B+2D}$, from which it follows that $W^\flat$ is double
along $Z:=A \cap D$, and touches $B$ doubly along $A\cap B$, which
accounts for the whole intersection scheme of $W^\flat$ and $B$. 
The base locus of this pencil is the scheme defined by the ideal 
$\I_Z^2(\I_A^2+\I_B)$.

The pull-back $\pi^* W^\flat \in |2kL|$ splits as $W+i(W)$, with $i$ the
involution on $V$ associated to $\pi$~; it has a double singularity along
$Z':= \pi^{-1}(Z)$ and $\pi^{-1}(B\cap A)$, with at each point
one local sheet belonging to $W$ and another to $i(W)$. The union 
$Z' \cup \pi^{-1}(B\cap A)$ is the base locus of the pencil 
$\vect {A', B+D'}$.
\end{proof}

\begin{remark}
Note that $\pi$ induces a $2:1$ map
$\vect {A', B+D'} \to \vect {2A, B+2D}$ wich ramifies exactly at the
two points $[A']$ and $[B+D']$. 
It is the restriction of the map $\pi_*:|kL|\to |2kH|$, which is $2:1$
on its image, the join of the two loci of divisors $2A$ and $B+2D$
respectively. 
Both these loci are to be regarded as $2$-Veronese varieties,
of respective degrees $2^{h(k)-1}$ and $2^{h(k-d)-1}$.
By the general point of the join there passes a unique joining line, so 
$\pi_*(|kL|)$  has
degree $2^{h(k)+h(k-d)-2}=2^{\dim (|kL|)-1}$ as a subvariety of
$|2kH|$ (see, \eg \cite[p.~236]{harris}).
\end{remark}

\paragraph{}\label{p:wps}
All the above may be conveniently seen by writing down equations.
The double cover $V$ is the degree $2d$ hypersurface
\(  y^2 = g_{2d}(\bx),\)
in the weighted projective space $\P(1^{n+1},d)$ with homogeneous
coordinates $(\bx,y)$, $\bx=(x_0,\ldots,x_n)$, where $g_{2d}$ is the
equation of $B$ in $\P^n$.

The linear system $|kL|$ is cut out on $V$ by weighted $k$-ics in
$\P(1^{n+1},d)$. For all $W \in |kL|$ there exist
homogeneous polynomials $f_{k-d}(\bx)$ and $f_k(\bx)$ of respective
degrees $k-d$ and $k$ such that the homogeneous ideal of $W$ in
$\P(1^{n+1},d)$ is
\begin{equation*}
  I_W = 
  \bigl(
  y^2 - g_{2d}(\bx),\
  y f_{k-d}(\bx) - f_k(\bx)
  \bigr).
\end{equation*}
In the decomposition \eqref{eq:decomp:kL} of
$H^0(V,kL)$, the first and second summands respectively consist of
polynomials of the form $f_k(\bx)$ and $y f_{k-d}(\bx)$, and the
involution $i$ is given by $(\bx,y) \mapsto (\bx,-y)$.
The divisors $A'$ and $B+D'$ are defined in $V$ by the equations
$f_k(\bx)=0$ and $y f_{k-d}(\bx)=0$ respectively.
Eliminating $y$ from the ideal $I_W$,
one finds the equations of $W^\flat$
in $\P^n$, namely
\begin{equation*}
  f_k (\bx)^2 = g_{2d}(\bx) f_{k-d}(\bx)^2.
\end{equation*}
Then the hypersurfaces $A$ and $D$ in $\P^n$ are defined by
$f_k(\bx)=0$ and $f_{k-d}(\bx)=0$ respectively.

\section{Families of double covers containing a divisor}
\label{S:main}

\paragraph{}
\label{p:notation:VW}
We shall consider double covers as in \ref{p:double-covers} and the
corresponding linear systems in families.
For all $d\in \N^*$ we let $\V d$ be the linear system of
degree $2d$ hypersurfaces in $\P^n$, which we consider as the
parameter space of double covers of $\P^n$ branched over a
$2d$-ic. 
Then for all $k \geq d$, we consider $\VW {k,d}$ the parameter space
for pairs $(V,W)$ with $V \in \V d$ and $W \in |kL|$ on $V$
(where as usual $L=\pi^*H$ on $V$, with $\pi:V \to \P^n$ the
double cover).

On the other hand for all $k \geq d$ we let 
$\Z {k,d}$ be the parameter space for complete intersections
of bidegree $(k,k-d)$ in $\P^n$
(with the convention that for $k=d$ this is just one point
corresponding to $Z=\emptyset$).
Eventually, we let $\W {k,d}$ be the parameter space for degree
$2k$ hypersurfaces in $\P^n$ singular along some $Z \in \Z {k,d}$.

From \ref{p:double-covers} we have a commutative diagram
\[
\xymatrix{
\VW {k,d} 
\ar[d]_p
\ar[dr]^q
\\
\V d
& \W {k,d}
}
\]
where $p$ and $q$ are the forgetful maps $(V,W)\mapsto V$ and
$(V,W)\mapsto W^\flat$ respectively.
Our main result is the following.

\begin{theorem}
\label{t:main}
The forgetful map
$q: \VW {k,d} \to \W {k,d}$
is dominant, with generic fibre of dimension
$h^0(\P^n,2k-d)$.
\end{theorem}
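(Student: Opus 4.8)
The plan is to work throughout with the explicit equations of \ref{p:wps}, splitting the statement into a dominance assertion and a computation of the generic fibre.

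\emph{Dominance.} First I would take a general $W^\flat=V(F)$ in $\W{k,d}$, double along some $Z\in\Z{k,d}$, and write $\I_Z=(a,b)$ with $\deg a=k$ and $\deg b=k-d$. Since $W^\flat$ is singular along $Z$, its equation satisfies $F\in(\I_Z^2)_{2k}$, so there are $\lambda\in\C$, $c\in H^0(\P^n,dH)$ and $e\in H^0(\P^n,2dH)$ with
\[
F=\lambda\,a^2+c\,ab+e\,b^2 .
\]
For a general such $W^\flat$ the coefficient $\lambda$ is nonzero, and completing the square gives
\[
F=\lambda\Bigl(a+\tfrac{c}{2\lambda}\,b\Bigr)^{2}-\lambda\,g_{2d}\,b^{2},\qquad g_{2d}:=-\tfrac1\lambda\Bigl(e-\tfrac{c^{2}}{4\lambda}\Bigr)\in H^0(\P^n,2dH).
\]
Putting $f_k=a+\tfrac{c}{2\lambda}b$ and $f_{k-d}=b$, this is exactly the equation $f_k^2=g_{2d}f_{k-d}^2$ of \ref{p:wps}; hence $W^\flat$ is the image of the member $W=V(y f_{k-d}-f_k)$ of $|kL|$ on the double cover $V=\{y^2=g_{2d}(\bx)\}$, so $q(V,W)=W^\flat$ and $q$ is dominant.

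\emph{Generic fibre.} Next I would fix a general $W^\flat=V(F)$ in the image, with $Z=\Sing(W^\flat)$ and $a,b$ as above, and describe $q^{-1}(W^\flat)$. A point of the fibre is a triple $(g_{2d};f_k,f_{k-d})$ with $f_k^2-g_{2d}f_{k-d}^2=F$. As $Z=V(f_k,f_{k-d})$, while $(\I_Z)_{k-d}=\C b$ and $(\I_Z)_k=\C a\oplus b\,H^0(\P^n,dH)$, the forms $f_{k-d}$ and $f_k$ must generate $\I_Z$; normalising $f_{k-d}=b$ I may write $f_k=\nu a+\psi b$ with $\nu\in\C$ and $\psi\in H^0(\P^n,dH)$, and then $g_{2d}=(f_k^2-F)/b^2$ is forced. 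The constraint is that this be a polynomial: imposing $b^2\mid f_k^2-F$ and using that $a,b$ is a regular sequence, one gets first $\nu^2=\lambda$ (so $\nu$ is determined up to sign) and then $2\nu\psi\equiv c\pmod b$, which means $\psi$ ranges over a coset of $b\,H^0(\P^n,(2d-k)H)$ inside $H^0(\P^n,dH)$. Distinct $\psi$ give distinct $f_k$ (hence distinct $(V,W)$), so the fibre is parametrised by $H^0(\P^n,(2d-k)H)$ and has dimension $h(2d-k)$, with the convention that it is a single point once $k>2d$.

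\emph{Main obstacle and a check.} The delicate part is not the algebra but the genericity inputs to the dominance step: one has to verify that for general $W^\flat\in\W{k,d}$ the singular scheme $Z$ is genuinely the complete intersection $V(a,b)$ (so that $\I_Z=(a,b)$ and $F$ has the stated form), that $\lambda\neq0$, and---most importantly---that the branch divisor $B=V(g_{2d})$ produced by completing the square is smooth, so that $(V,W)$ is an honest point of $\VW{k,d}$ meeting the hypotheses of \ref{p:double-covers}. I expect this smoothness/genericity verification to be the crux, and it is presumably where the flaw acknowledged above was located. As an independent numerical check I would compute directly $\dim\VW{k,d}=h(2d)+h(k)+h(k-d)-2$, and $\dim\W{k,d}$ by evaluating the Hilbert function of $\I_Z^2$ in degree $2k$; their difference comes out to $h(2d-k)$, confirming the fibre count once dominance and the irreducibility of $\VW{k,d}$ are established.
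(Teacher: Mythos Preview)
Your proof is correct and follows the paper's approach: completing the square for dominance is exactly Proposition~\ref{pr:everywhere-tg}, and your syzygy-based parametrisation of the fibre is the content of the second half of that proposition together with the alternative argument the paper gives at the end of Section~\ref{S:main} (the paper's primary route to the exact fibre dimension is instead the global count of Proposition~\ref{pr:dim}, which you offer as an independent check). One remark on your self-identified obstacle: smoothness of $B$ is not actually needed for the theorem, since $\V d$ is defined in \ref{p:notation:VW} as the full linear system $|2dH|$ with no smoothness hypothesis; it suffices that $g_{2d}\neq 0$, which is clear for general $W^\flat$.
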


\bigskip
The following lemma is the keystone of the proof.
Recall that if $X \subset \P^n$ is a variety defined by the prime
ideal $I_X$, then for all positive integer $k$ the $k$-th symmetric
power $I_Z^{(k)}$ is the ideal of those polynomials vanishing at the
order at least $k$ at every point of $X$ (see, e.g.,
\cite[3.9]{eisenbud}).

\begin{lemma}
\label{l:resol}
Let $Z$ be the complete intersection in $\P^n$ of two hypersurfaces of
degrees $a$ and $b$ respectively.
The second symmetric power $I_Z^{(2)}$ of
the homogeneous ideal $I_Z \subset R=\k[x_0,\ldots,x_n]$
has the following resolution,
\begin{equation*}
0 \to 
R(-2a-b)\oplus R(-a-2b)
\to
R(-2a)\oplus R(-a-b)\oplus R(-2b)
\to
{I}_Z^{(2)} \to 0.
\end{equation*}
\end{lemma}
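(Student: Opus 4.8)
The plan is to reduce the assertion about the symbolic power $I_Z^{(2)}$ to an explicit computation with the ordinary square $I_Z^2$, and then to write down the resolution and check exactness by hand. First I would set $I_Z=(f,g)$ with $f,g$ homogeneous of degrees $a$ and $b$. Since $Z$ is the complete intersection of the two corresponding hypersurfaces, $f,g$ form a regular sequence in the polynomial ring $R$; equivalently, in the UFD $R$ the polynomials $f$ and $g$ have no common factor. The crucial preliminary step is the identification $I_Z^{(2)}=I_Z^2=(f^2,fg,g^2)$. Using that $Z$ is reduced (indeed smooth, in the situation where the lemma is applied), one localizes at each minimal prime of $Z$, where $I_Z$ is generated by part of a regular system of parameters; there $I_Z^2$ is primary with no embedded component, so $I_Z^2$ is unmixed globally and hence agrees with the symbolic power $I_Z^{(2)}$ (this is the standard coincidence of symbolic and ordinary powers for a reduced complete intersection; compare \cite{eisenbud}).

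Granting this, the candidate resolution is the usual one for the square of a codimension-two complete intersection, built from the Koszul syzygies of $f,g$. Concretely I would take $F_0=R(-2a)\oplus R(-a-b)\oplus R(-2b)$ with the surjection onto $I_Z^2$ sending the three basis vectors to $f^2,\ fg,\ g^2$. The first syzygies are carried by the two Koszul relations $g\cdot f^2-f\cdot fg=0$ and $g\cdot fg-f\cdot g^2=0$, which give the map
\[
\begin{pmatrix} g & 0 \\ -f & g \\ 0 & -f \end{pmatrix}\colon\ R(-2a-b)\oplus R(-a-2b)\ \longrightarrow\ F_0.
\]
The twists are forced by the degrees of $f$ and $g$ and match exactly those in the statement.

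It then remains to verify exactness at each spot. Surjectivity onto $I_Z^{(2)}=I_Z^2$ is immediate, and the composite of the two maps vanishes by a one-line check. Injectivity of the left-hand map follows because $R$ is a domain and $f,g\neq 0$. The only substantive point is exactness in the middle. Given $(P,Q,S)$ with $Pf^2+Qfg+Sg^2=0$, I would observe that $Pf^2=-g(Qf+Sg)$, so $g\mid Pf^2$; coprimality of $f$ and $g$ forces $g\mid P$, say $P=gP'$. Cancelling one factor of $g$ yields $P'f^2+Qf+Sg=0$, whence $f(P'f+Q)=-Sg$, and applying coprimality again gives $g\mid(P'f+Q)$, say $P'f+Q=gT$, so that $Q=gT-fP'$ and $S=-fT$. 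Thus $(P,Q,S)=P'(g,-f,0)+T(0,g,-f)$ lies in the image of the syzygy map, as required.

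The main obstacle is really the first step, the reduction $I_Z^{(2)}=I_Z^2$: without it one cannot know a priori that the symbolic power is generated in the degrees $2a,\ a+b,\ 2b$ appearing in the statement, and the very shape of the resolution would be in doubt (for a non-reduced $Z$ the symbolic power genuinely differs from the ordinary one). Once that reduction is secured, the resolution is the standard Koszul-type complex and its exactness is the elementary regular-sequence computation sketched above.
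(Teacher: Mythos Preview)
Your argument is correct and takes the same route as the paper: first reduce $I_Z^{(2)}$ to $I_Z^2$, then resolve the latter via the evident Koszul-type syzygies (the paper obtains the resolution by reducing to $(x,y)^2\subset\k[x,y]$ and citing \cite{eisenbud-syz}, whereas you verify exactness directly from coprimality of $f$ and $g$; both are fine). One small point: your localization sketch for the equality $I_Z^{(2)}=I_Z^2$ does not quite establish global unmixedness---primariness at each minimal prime says nothing about embedded components---and reducedness of $Z$ is unnecessary; the paper simply cites \cite{zariski-samuel} for the standard fact that powers of an ideal generated by a regular sequence have no embedded primes.
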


\vspace{\negvcorr}
\begin{proof}
By \cite[Appendix~6, Lemma~5]{zariski-samuel}, the
second symmetric power $I_Z^{(2)}$ equals
the square $I_Z^2$. We compute a resolution of $I_Z^2$ as follows.
The ideal $(x,y)^2 \subset \k[x,y]=S$ has the resolution
\begin{equation*}
0  \to
S(-3)^{\oplus 2}
\xrightarrow 
{\scriptsize
  \begin{pmatrix}
-y & 0 \\
x & -y \\
0 & x 
  \end{pmatrix}
}
S(-2)^{\oplus 3}
\xrightarrow 
{\scriptsize
(x^2, xy, y^2)
}
S \to 0,
\end{equation*}
as may be computed for instance using the method of
\cite[Cor.2.4]{eisenbud-syz}, and then removing the trivial extraneous
factors.
The resolution for $I_Z^2$ may then be obtained by replacing $x$ and
$y$ by two generators of $I_Z$, of degrees $a$ and $b$ respectively.
Alternatively, see \cite{guardo-vantuyl}.
\end{proof}

\begin{proposition}
\label{pr:everywhere-tg}
Let $k \geq d$ be two positive integers.
Let $W^\flat$ be a degree $2k$ hypersurface in $\P^n$,
with an ordinary double singularity along $Z$, a smooth 
complete intersection of type $(k,k-d)$ in $\P^n$, 
with the convention that $Z=\emptyset$ if $k=d$.
Assume moreover that $W^\flat$ is general among such hypersurfaces.
Then there exists a family 
of dimension at least $h^0(\P^n,\O(2d-k))$
of degree $2d$ hypersurfaces $B$ everywhere
tangent to $W^\flat$
and such that there exists a unique degree $k$ hypersurface
containing $Z$ and cutting out the contact locus of $B$ and $W^\flat$
on $B$.
\end{proposition}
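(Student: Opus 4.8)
The plan is to recover the branch divisor by completing a square, using Lemma~\ref{l:resol} to put the equation of $W^\flat$ into the right shape, and then to produce a whole family of branch divisors by exploiting the syzygies of $I_Z^{(2)}$. First, since $W^\flat$ is singular along $Z$, its defining polynomial $F$ is a degree $2k$ element of $I_Z^{(2)} = I_Z^2$. Fix generators $f_k, f_{k-d}$ of $I_Z$ of degrees $k$ and $k-d$. Reading off the resolution of Lemma~\ref{l:resol} (with $a=k$, $b=k-d$) in degree $2k$, the middle term becomes $\k\oplus H^0(\P^n,\O(d))\oplus H^0(\P^n,\O(2d))$, so one may write
\[
F = \alpha\, f_k^2 + \beta\, f_k f_{k-d} + \gamma\, f_{k-d}^2, \qquad \alpha \in \k,\ \beta \in H^0(\O(d)),\ \gamma \in H^0(\O(2d)).
\]
For $W^\flat$ general, $\alpha \neq 0$, and after rescaling we take $\alpha = 1$. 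Completing the square,
\[
F = \tilde f_k^{\,2} - g\, f_{k-d}^2, \qquad \tilde f_k := f_k + \tfrac12 \beta f_{k-d},\quad g := \tfrac14 \beta^2 - \gamma,
\]
which is exactly the shape of the equations in \ref{p:wps} for the double cover branched along $B := \{g = 0\}$, a degree $2d$ hypersurface. This produces one admissible $B$.

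Next I would check the two geometric conditions. Restricting to $B$ gives $F|_B = (\tilde f_k|_B)^2$, so $W^\flat$ meets $B$ in twice the divisor $A\cap B$, where $A := \{\tilde f_k = 0\}$; hence $W^\flat$ is everywhere tangent to $B$, with contact locus $A \cap B$. Moreover $\tilde f_k \in I_Z$, so the degree $k$ hypersurface $A$ contains $Z$ and cuts out the contact locus on $B$. For uniqueness, any competing degree $k$ hypersurface $\{\tilde f_k' = 0\} \supset Z$ with the same trace on $B$ yields $\tilde f_k' - c\tilde f_k \in (g)$ for some $c\in\k$; writing $\tilde f_k' - c\tilde f_k = h\,g$ and restricting to $Z$ (on which $\tilde f_k,\tilde f_k'$ vanish while $g$ does not, as $Z \not\subset B$ for general $W^\flat$) forces $h \in I_Z$, whence $h = 0$ because $\deg h = k-2d < k-d$ and $(I_Z)_m = 0$ for $m < k-d$. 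Thus $A$ is unique.

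Finally, to obtain the claimed dimension I would vary the representation of $F$ through the syzygies of $I_Z^2$, i.e.\ through the leftmost term $R(-(3k-d)) \oplus R(-(3k-2d))$ of Lemma~\ref{l:resol}. For $k \geq d$ only the second syzygy $(0, -f_{k-d}, f_k)$ survives in degree $2k$, contributing a coefficient $s \in H^0(\P^n, \O(2d-k))$: replacing $(\beta, \gamma)$ by $(\beta - s f_{k-d},\, \gamma + s f_k)$ leaves $F$ unchanged but yields a new branch divisor $B(s) = \{g(s) = 0\}$ with $g(s) = \tfrac14(\beta - s f_{k-d})^2 - (\gamma + s f_k)$. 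The derivative of $s \mapsto g(s)$ at $s = 0$ in a direction $\dot s$ is $-\dot s\,\tilde f_k$, which is injective (multiplication by $\tilde f_k \neq 0$ in the domain $R$) and remains injective modulo $\langle g\rangle$ since $\tilde f_k \nmid g$ for general $W^\flat$; hence $s \mapsto B(s)$ is generically injective and the family of admissible $B$ has dimension at least $h^0(\P^n, \O(2d-k))$.

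The crux, and the only real use of the genericity of $W^\flat$, is the matching between Lemma~\ref{l:resol} and the completion of the square: the admissible $B$ are precisely the outputs of completing the square in the various representations of $F \in (I_Z^2)_{2k}$, and the redundancy among these representations is measured exactly by $H^0(\P^n, \O(2d-k))$. The case $k = d$ (where $Z = \emptyset$ and $f_{k-d}$ is a unit) is degenerate and I would dispatch it separately; there the same square-completion applies to a general degree $2d$ form and again yields a family of the asserted dimension.
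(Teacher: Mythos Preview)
Your proof is correct and follows the paper's approach: use Lemma~\ref{l:resol} to write the equation of $W^\flat$ as a quadratic form in $f_k, f_{k-d}$, complete the square to extract $B$, and vary through the degree-$2k$ syzygy of $I_Z^2$ to obtain the family. You supply two details the paper leaves implicit---the uniqueness of the degree~$k$ hypersurface cutting out the contact locus, and the tangent-space check that the family of $B$'s genuinely has the asserted dimension---but the core argument is identical.
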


\begin{proof}
Let us first prove the existence of one smooth $(2d)$-ic 
$B$ with the required properties.
Let $f_{k-d},f_k$ be generators of degrees $k-d$ and $k$ respectively
of the homogeneous ideal $I_Z$.
By the resolution of $I_Z^{(2)}$ given in Lemma~\ref{l:resol}, the
equation $g$ of $W^\flat$ may be written in the form
\begin{equation}\label{exprg1}
  g =
  f_{k-d}^2 g_{2d}+2 f_{k-d}f_k g_d + f_k^2,
\end{equation}
with $g_{2d},g_d$ homogeneous polynomials of degrees $2d,d$
respectively.

We now put this equation in an appropriate form:
\begin{align}
  \notag
  g=
  f_{k-d}^2 g_{2d}+2 f_{k-d}f_k g_d + f_k^2
 &=
   (f_k + f_{k-d} g_d)^2 + f_{k-d}^2 ( g_{2d} -g_d^2 )
  \\
\label{canform}
  &= \tilde f_k ^2 + f_{k-d}^2 \tilde g_{2d},
\end{align}
where $\tilde f_k = f_k + f_{k-d} g_d$, and
$\tilde g_{2d} = g_{2d} -g_d^2$.
Note that $\tilde f_k$ generates $I_Z$ together with
$f_{k-d}$.
By generality of $W^\flat$ the  $(2d)$-ic $B$ defined by $\tilde
g_{2d}$ is smooth, and by the equation of $W^\flat$ in its form
\eqref{canform}, $B$ is everywhere tangent to $W^\flat$,
along $(g=\tilde f_k=0)$.

Now the existence of a family of the asserted dimension of such $B$'s
comes from the non-unicity of the expression of $g$ in
\eqref{exprg1}, and more precisely of the linear syzygy between
$f_{k-d}^2$ and $f_{k-d}f_k$.
For all $a_{2d-k} \in H^0(\P^n, \O(2d-k))$ we may also write
\begin{equation*}
  g = f_{k-d}^2 \bigl( g_{2d} + 2a_{2d-k} f_k \bigr)
  +2 f_{k-d}f_k \bigl( g_d - a_{2d-k} f_{k-d} \bigr)
  + f_k^2,
\end{equation*}
and the same computations as above yield that
\(
g = \hat f_k ^2 + f_{k-d}^2 \hat g_{2d}
\),
with
\begin{equation*}
\begin{aligned}
  \hat f_k &= f_k + f_{k-d} \bigl( g_d- a_{2d-k} f_{k-d} \bigr)
  \\
  & = \tilde f_k - a_{2d-k} f_{k-d}^2
\end{aligned}
\qquad \text{and}
\qquad
\begin{aligned}
  \hat g_{2d} &= \bigl( g_{2d} + 2a_{2d-k} f_k \bigr)
  - \bigl( g_d - a_{2d-k} f_{k-d} \bigr)^2
  \\
  & = \tilde g_{2d}
  + a_{2d-k} \bigl(
  2 \tilde f_k- a_{2d-k} f_{k-d} ^2
  \bigr).
\end{aligned}
\end{equation*}
The hypersurfaces defined by $\hat g_{2d}$ move in dimension
$h^0(\P^n, \O(2d-k))$ and have the asserted properties.
\end{proof}

\bigskip
The above Proposition~\ref{pr:everywhere-tg} shows that the forgetful
map $q$ is dominant with fibres of dimension at least
$h^0(\P^n,\O(2d-k))$. The next statement shows that this is indeed the
exact dimension, by a dimension count.

\begin{proposition}
\label{pr:dim}
For all $k \geq d$, we have
\[
\dim(\VW {k,d})-\dim(\W {k,d}) = 
h^0(\P^n,\O(2d-k)).
\]
\end{proposition}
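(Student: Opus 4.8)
The plan is to compute the two dimensions separately, each time by slicing the parameter space with a natural forgetful fibration, and then to subtract. Once the set-up is in place the whole thing reduces to bookkeeping with the quantities $h(m)=h^0(\P^n,\O(m))$ (with the convention $h(m)=0$ for $m<0$), the only substantial inputs being the isotypic splitting \eqref{eq:decomp:kL} and the resolution of Lemma~\ref{l:resol}.

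For the source I would use the projection $p:\VW{k,d}\to\V{d}$. The base has dimension $\dim\V{d}=h(2d)-1$, and the fibre over $V$ is the linear system $|kL|$, of dimension $h^0(V,kL)-1=h(k)+h(k-d)-1$ by \eqref{eq:decomp:kL} (recall that $kH-\tfrac12 B$ has degree $k-d$). Since this fibre dimension is independent of $V$, the product rule gives
\[
\dim\VW{k,d}=h(2d)+h(k)+h(k-d)-2.
\]

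For the target I would fibre $\W{k,d}$ over $\Z{k,d}$ through the incidence correspondence $\{(W^\flat,Z): W^\flat \text{ singular along } Z\}$, which needs two computations. First, $\dim\Z{k,d}$: parametrise $Z$ by a pair $(A,D)$ of defining hypersurfaces of degrees $k$ and $k-d$. The space of such pairs has dimension $(h(k)-1)+(h(k-d)-1)$, while the fibre of $(A,D)\mapsto A\cap D$ over a general $Z$ has dimension $h(d)$ — the lower-degree equation is determined up to scalar, and the degree-$k$ one only up to scalar and modulo $f_{k-d}\,H^0(\O(d))$ — so for $k>d$,
\[
\dim\Z{k,d}=h(k)+h(k-d)-h(d)-2.
\]
Second, the fibre over $Z$ is $\P\bigl((I_Z^{(2)})_{2k}\bigr)$, the degree-$2k$ forms vanishing to order $\ge 2$ along $Z$, up to scalar. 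Reading off the degree-$2k$ graded piece of the resolution in Lemma~\ref{l:resol} with $(a,b)=(k,k-d)$ gives, for $k>d$,
\[
\dim\,(I_Z^{(2)})_{2k}=1+h(d)+h(2d)-h(2d-k).
\]
Adding the two contributions yields $\dim\W{k,d}=h(k)+h(k-d)+h(2d)-h(2d-k)-2$, and subtracting this from $\dim\VW{k,d}$ leaves exactly $h(2d-k)=h^0(\P^n,\O(2d-k))$, as claimed.

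The parts that need genuine care, rather than formula-pushing, are the following. One must check that each forgetful map is a fibration with fibre dimension constant on a dense open locus, so that the product rule applies; for $\W{k,d}\to\Z{k,d}$ this is automatic because Lemma~\ref{l:resol}, and hence $\dim(I_Z^{(2)})_{2k}$, is uniform over all smooth complete intersections $Z$ of type $(k,k-d)$, and graded exactness of that two-term resolution delivers the displayed dimension directly, with no cohomology vanishing to invoke. One must also verify that the incidence correspondence is generically finite (indeed birational) onto $\W{k,d}$, i.e.\ that a general such $W^\flat$ is singular along a unique $Z$ — true since $Z$ is recovered as the singular locus. Finally, the parametrisation computing $\dim\Z{k,d}$ degenerates at $k=d$ (then $Z=\emptyset$ and $\Z{d,d}$ is a single point, with $\dim\W{d,d}=h(2d)-1$), so this boundary case should be dispatched by hand; one checks directly that the difference is again $h(d)=h^0(\P^n,\O(2d-k))$. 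The main obstacle is thus not any single hard step but making the dimension count rigorous, chiefly by pinning down $\dim\Z{k,d}$ and the constancy of the fibre dimensions.
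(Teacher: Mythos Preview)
Your proposal is correct and follows essentially the same route as the paper: both compute $\dim\VW{k,d}$ via the projective bundle over $\V d$, compute $\dim\W{k,d}$ by fibring over $\Z{k,d}$ and using Lemma~\ref{l:resol} for the fibre, and handle $k=d$ separately. The only cosmetic difference is in the computation of $\dim\Z{k,d}$ --- the paper fixes the unique $(k-d)$-ic $D\supset Z$ and then moves in $|kH|_D$, whereas you quotient the space of pairs $(A,D)$ by the stabiliser of $Z$ --- and you make explicit the birationality of the incidence correspondence onto $\W{k,d}$, which the paper leaves tacit.
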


\begin{proof}
The space $\VW {k,d}$ is a projective bundle over $\V d$,
and for $V \in \V d$ we have $\dim(|kL|)=h(k)+h(k-d)-1$ by
\ref{p:double-covers}, therefore
\begin{equation*}
\dim(\VW {k,d})=\dim(\V d)+h(k)+h(k-d)-1
= h(2d)+h(k)+h(k-d)-2.
\end{equation*}
If $k>d$, for all $Z \in \Z {k,d}$, there is a unique degree $k-d$
hypersurface $D$ which contains $Z$. In turn the linear system $|kH|$ on
$D$ has dimension
$h^0(D,kH)-1=h(k)-h(d)-1$,
hence
\begin{equation*}
\dim(\Z {k,d}) = h(k-d)+h(k)-h(d)-2.
\end{equation*}
Eventually for each $Z\in \Z {k,d}$, hypersurfaces of degree $2k$
singular along $Z$ form a linear system of dimension
\begin{align*}
h^0(\P^n,\I_Z^{(2)}(2k))-1
&= h(0)+h(d)+h(2d)-h(2d-k)-h(d-k)-1
\\
&= h(d)+h(2d)-h(2d-k)
\end{align*}
where the first equality is obtained from the resolution of $\I_Z^{(2)}$
given in Lemma~\ref{l:resol}.
We thus find
\begin{equation*}
\dim(\W {k,d}) =
\begin{cases}
h(k-d)+h(k)+h(2d) -h(2d-k)-2
& \text{if } k>d \\
h(2d)-1
& \text{if } k=d.
\end{cases}
\end{equation*}
hence
\begin{equation*}
\dim(\VW {k,d}) - \dim(\W {k,d}) =
\begin{cases}
h(2d-k)
& \text{if } k>d \\
h(d)
& \text{if } k=d.
\end{cases}
\end{equation*}
\end{proof}

\bigskip
We may now prove our main result stated above.

\begin{proof}[Proof of Theorem~\ref{t:main}]
Proposition~\ref{pr:everywhere-tg} shows that $q$ is dominant,
since for general $W^\flat \in \W {k,d}$ and for all $B$ as in
Proposition~\ref{pr:everywhere-tg},
the double cover $\pi: V \to \P^n$ branched over $B$
has two copies of $W$,
the partial normalization $W$ of $W^\flat$
along $Z\in \Z {k,d}$, as membres of $|\pi^*\O(k)|$.
Namely, the pull-back $\pi^* W^\flat$ splits as two copies of $W$,
as explained in Section~\ref{S:double-covers}.

On the other hand,
the two spaces  $\VW {k,d}$ and $\W {k,d}$ are seen to be irreducible
from their descriptions in the proof of Proposition~\ref{pr:dim}, and the
dimension of the generic fibre of $q$ is
$\dim(\VW {k,d})-\dim(\W {k,d}) = h(2d-k)$ by the same
Proposition~\ref{pr:dim}. 
\end{proof}

\bigskip
In fact it is even possible to prove Theorem~\ref{t:main} without the
constructive Proposition~\ref{pr:everywhere-tg}.
By Proposition~\ref{pr:dim} it is enough to show that the generic
fibre of $q$ has dimension $h^0(\P^n, \O(2d-k))$, and this may be seen
as follows.
Let $(V,W) \in \VW {k,d}$. Then $V$ is a degree $2d$ hypersurface in
the weighted projective space $\P(1^{n+1},d)$ and $W$ is cut out on
$V$ by a degree $k$ hypersurface in $\P(1^{n+1},d)$,
see \ref{p:wps}.

If $k>d$,
those $V'$ such that $(V',W)$ belongs to $\VW {k,d}$ form the linear
system
\[
  \P H^0 \bigl(
  \P(1^{n+1},d), \I_W(2d)
  \bigr),
\]
which has dimension
\[
  h^0 \bigl(
  \P(1^{n+1},d), \O(2d-k)
  \bigr)
  =
  h^0 \bigl( \P^n, \O(2d-k) \bigr),
\]
the equality coming from the fact that $2d-k<d$, hence degree $2d-k$
polynomials don't involve the weight $d$ variable.

If $k \leq d$ one has to take care of the fact that there are
automorphisms of $\P(1^{n+1},d)$ fixing $W$, and degree $k$
hypersurfaces in the same orbit under the action of automorphisms of
$\P(1^{n+1},d)$ give the same double cover.
When $k=d$ the automorphisms fixing $W$ are all of the form
$(\bx,y) \mapsto (\bx,ay)$ for some $a \in \CC$ hence form a
$1$-dimensional group, whereas the linear system of $(2k)$-ics
containing $W$ has dimension
\[
  h^0 \bigl(
  \P(1^{n+1},d), \O(d)
  \bigr)
  =
  h^0 \bigl( \P^n, \O(d) \bigr) +1.
\]

\section{Curves on $K3$ surfaces of genus $2$}
\label{S:K3-g2}

For all integers $k\geq 3$, we let $\C k$ be the locus in 
$\M_{k^2+1}$ of those curves $C$ that have a plane model $C^\flat$ of
degree $2k$ with $k(k-3)$ nodes forming together
a complete intersection of type $(k,k-3)$, and no further
singularity. 

\begin{theorem}
\label{t:g2}
For all $k\geq 3$,
the forgetful map
$c_{k^2+1}^k: \KC_{k^2+1}^k \to \M_{k^2+1}$ dominates $\C k$.\footnote
{by this we mean by a slight abuse of terminology that the Zariski
closure of the image of $c_{k^2+1}^k$ coincides with that of $\C k$.}
For very general $C \in \C k$, 
the Gauss--Wahl map of $C$ has corank
\[
\cork(\Phi_C)
= h^0(\P^2, \O(6-k)) +1 -\nu_2(C),
\]
with
\[
\nu_2(C) =  h^0\bigl(
N_{C/\P^{g-1}}(-2)
\bigr)
= 
\begin{cases}
1 & \text{if}\ k=3 \\
0 & \text{if}\ k \geq 4.
\end{cases}
\]
\end{theorem}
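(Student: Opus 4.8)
The plan is to read the dominance statement off our main theorem, to compute the dimension of the general fibre of $c_{k^2+1}^k$ by a dimension count whose only non-formal ingredient is a finiteness coming from the fact that the relevant $g^2_{2k}$ is a root of the canonical bundle, and finally to convert this fibre dimension into $\cork(\Phi_C)$ via the extension machinery recalled in subsection~\ref{S:extensions}.

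For the dominance I would apply Theorem~\ref{t:main} and Proposition~\ref{p:descr} with $n=2$, $d=3$. A very general $C\in\C k$ is the normalisation of a plane curve $C^\flat$ of degree $2k$ with $k(k-3)$ nodes forming a complete intersection $Z$ of type $(k,k-3)$; by Proposition~\ref{pr:everywhere-tg} such a $C^\flat$ lifts to a member of $|kL|$ on the double sextic plane branched over a suitable $B$, so $C$ lies in the image of $c_{k^2+1}^k$. Conversely Proposition~\ref{p:descr} shows that the plane model attached to any $[S,C]\in\KC_{k^2+1}^k$ is exactly of this type, so the image is contained in $\C k$ and the two loci have the same closure.

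The heart of the matter is the fibre dimension. The key observation is that, writing $L_C=\restr{L}{C}$ for the restriction of the genus-$2$ polarisation, adjunction on the $K3$ surface gives
\[
K_C\cong\restr{\O_S(C)}{C}\cong L_C^{\otimes k},
\]
so the $g^2_{2k}$ defining the plane model is a $k$-th root of $K_C$. One sees this equally from the plane model alone: the nodes $Z$ are cut on $C^\flat$ by the degree $(k-3)$ curve $D$ of Proposition~\ref{p:descr}, so if $E$ denotes the preimage of $Z$ on $C$ one has $\O_C(E)\cong L_C^{\otimes(k-3)}$ and hence $K_C\cong L_C^{\otimes(2k-3)}(-E)\cong L_C^{\otimes k}$. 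Since the $k$-th roots of a fixed line bundle form a torsor under the finite group $\Pic^0(C)[k]$, a very general $C\in\C k$ carries only finitely many plane models of the prescribed type up to $\PGL_3$; equivalently the fibres of $\W{k,3}\to\C k$ are single $\PGL_3$-orbits, so $\dim\C k=\dim\W{k,3}-8$. Combining this with $\dim\KC_{k^2+1}^k=18+h(k)+h(k-3)$ and the value of $\dim\W{k,3}$ from Proposition~\ref{pr:dim} gives, after a short computation,
\[
\dim\bigl((c_{k^2+1}^k)^{-1}(C)\bigr)=h^0(\P^2,\O(6-k)).
\]

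It remains to pass from this fibre dimension to $\cork(\Phi_C)$. For $k\geq 4$ one has $g=k^2+1\geq 17$ and, by Green--Lazarsfeld, $L_C$ computes $\Cliff(C)=2k-4>2$ (I would check that no multiple of $L$ restricts to a smaller Clifford index); then \cite[Thm.~2.6]{cds} applies and yields $\cork(\Phi_C)=h^0(\P^2,\O(6-k))+1$, in agreement with the claimed $\nu_2(C)=0$. To establish $\nu_2(C)=h^0(N_{C/\P^{g-1}}(-2))=0$ directly I would use the normal bundle sequences of $C\subset S\subset\P^{g}$ (noting that $N_{C/S}(-2)\cong\O_C(-1)$ has no sections), reducing the vanishing to cohomology on $S$. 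The remaining case $k=3$ is genuinely exceptional: there $C^\flat$ is a smooth plane sextic, $\Cliff(C)=2$ and $g=10$, so \cite{cds} does not apply; here I would quote the known value $\cork(\Phi_C)=10$ and compute $\nu_2(C)=1$ separately. The delicate point, and the one I expect to be the main obstacle, is exactly this computation of $\nu_2$: the coincidence $h^0(\P^2,\O(k))=g$ holds precisely for $k=3$, so that only then does the canonical map factor through the Veronese surface $v_3(\P^2)$, on which $C$ sits as a quadric section. This extra quadric produces the single obstruction measured by $\nu_2=1$, in accordance with the expectation of subsection~\ref{S:extensions} that extensions with fixed first infinitesimal neighbourhood are governed by $\bigoplus_{l\geq 2}H^0(N_{C/\P^{g-1}}(-l))$.
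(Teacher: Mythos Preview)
Your argument is correct and follows essentially the same route as the paper: dominance via Theorem~\ref{t:main} with $n=2$, $d=3$; fibre dimension $h^0(\P^2,\O(6-k))$; then \cite[Thm.~2.6]{cds} for $k\geq 4$ and the classical values for $k=3$. The paper is slightly more streamlined in that it reads the fibre dimension $h(6-k)$ directly off the fibre of $q:\VW{k,3}\to\W{k,3}$ in Theorem~\ref{t:main} and then just says ``divide out by $\PGL(3)$'', without redoing the dimension count.

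Your $k$-th root argument is a genuine addition: the paper's ``divide out by $\PGL(3)$'' tacitly assumes that the map $\W{k,3}/\PGL_3\to\C k$ (plane model to abstract curve) is generically finite, and you supply the reason --- the underlying $g^2_{2k}$ is a $k$-th root of $K_C$, hence lies in a finite set. One small overclaim: the root argument only gives \emph{finitely many} $\PGL_3$-orbits in the fibre, not a single one; but this is all you need for the equality $\dim\C k=\dim\W{k,3}-8$, so the dimension count is unaffected. For the Clifford index, note that Green--Lazarsfeld by itself only tells you $\Cliff(C)$ is computed by a line bundle on $S$; the paper pins this down by taking $C$ very general so that $\Pic(S)=\ZZ\cdot L$, which is what you should invoke rather than ``no multiple of $L$''. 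For $\nu_2(C)=0$ when $k\geq 4$, the paper simply uses that $\Cliff(C)>2$ forces this (citing \cite[\SS 3]{cds}); your proposed normal-bundle computation would also work but is not needed.
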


\noindent
(The normal bundle $N_{C/\P^{g-1}}$ is with respect to the canonical
embedding $C \subset \P^{g-1}$).

\begin{remark}
For $k=1,2$, the image of $c_{k^2+1}^k$ is the hyperelliptic locus in
$\M_{k^2+1}$, and the generic fibre has dimension $18$ and $15$
respectively. It is not clear in these cases how to relate the 
dimension of the generic fibre to the corank of the Gauss--Wahl map,
which is $3g-2$ for all genus $g$ hyperelliptic curves
\cite{wahl90, cm92}.
\end{remark}

\begin{proof}[Proof of Theorem~\ref{t:g2}]%
We consider the family $K_2$ of all smooth plane sextics;
it is a dense open subset of $\V 3$, in the notation of 
\ref{p:notation:VW}.
For all positive integers $k$, the moduli space $\K_{k^2+1}^k$ is
isomorphic to the quotient of $K_2$ by the action of $\PGL(3)$.
Over $K_2$, we consider for all $k\geq 3$ the dense open subset
$\sKC_{k^2+1}^k$ of $\VW {k,3}$ parametrizing pairs $(S,C)$ with $S$ a
smooth sextic double plane and $C$ a smooth member of the linear
system $|kL|$ on $S$, in our usual notation.
Then by Theorem~\ref{t:main} the forgetful map
$\sKC_{k^2+1}^k \to \W {k,3}$ is dominant, with general fibre of
dimension $h(6-k)$.
After we divide out by the action of $\PGL(3)$, we obtain that 
$c_{k^2+1}^k$ dominates $\C k$, with general fiber of dimension
$h(6-k)$. 

For very general $C \in \C k$ there exists, as we now know, a
primitive $K3$ surface $(S,L)$ of genus $2$ with $\Pic(S)=\Zint.L$,
such that $C \in |kL|$.
By \cite{green-lazarsfeld} the Clifford index of $C$ is computed by a
line bundle on $S$, hence
\[
  \Cliff(C) = \deg(\restr L C)-2\bigl(
  h^0(\restr L C) -1 \bigr) = 2k-4.
\]
For $k>3$ we thus have $\Cliff(C)>2$,
which implies that $\nu_2(C)=0$ 
(see \cite[{\SS 3}]{cds} and the references therein).
Moreover $C$ has genus $k^2+1 \geq 11$, so we
have by \cite[Thm.~2.6]{cds} that
\[
\cork(\Phi_C) = 
\dim \bigl( (c_{k^2+1}^k)^{-1}(C) \bigr) +1,
\]
which completes the proof in this case.

When $k=3$, $C$ is a smooth plane sextic so it has $\nu_2(C)=1$
(see \cite[Prop.4.3]{knutsen18}; 
this may also be proved as in 
Proposition~\ref{pr:cohom}),
and $\cork(\Phi_C)=10$ as for all smooth plane curves of degree $d\geq 5$
\cite[Rmk.~4.9]{wahl90}.
\end{proof}

\begin{remark}
\label{rem:rat-ell-exts}
Let $\Gamma$ be a nodal plane curve whose set of nodes $Z$ lies on
a cubic curve $T$. Then the rational surface $S_T$ obtained by
blowing-up the plane at the points $\Gamma \cap T$ provides an extension of
the canonical model of the normalization of $\Gamma$, see \cite[\SS
9]{cds} and the references therein.

Now let $k\geq 3$ and $C$ be a general member of $\C k$.
Then $C^\flat$ is a nodal curve with set of nodes $Z$, and there exists a
linear system of dimension $h(6-k)-1$ of cubics containing $Z$.
These cubics in turn give a family of the same dimension of mutually
non-isomorphic rational surfaces extending the canonical model of $C$.

If $k>3$, then $C$ has Clifford index $>2$ and genus $\geq 11$, so
that by \cite{cds} its canonical model has a universal
extension. Then the existence of a $K3$ extension of $C$, together
with the aforementioned rational extensions imply that the family of
extensions of $C$ has dimension $\geq h(6-k)$ hence
$\cork(\Phi_C) \geq h(6-k)+1$.
Since the latter inequality is in fact an equality, we see that those
rational extensions of $C$ form a divisor in the universal family of
all extensions of $C$.

Moreover note that $C$ may be seen as a smooth curve in the surface
$S'$ obtained by blowing up $\P^2$ at $Z$. This surface has
$h^1(S',\O_{S'})=0$ and $h^0(S',-K_{S'})=h(6-k)$, so that
\[
  \cork(\Phi_C)
  > h^0(S',-K_{S'}),
\]
thus showing that in \cite[Conjecture p.80]{wahl90} the inequality may
be strict.
\end{remark}

\paragraph{}
Prokhorov \cite{prokhorov} has proved that all Fano threefolds $V$ with
canonical Gorenstein singularities have $-K_V^3 \leq 72$, and that
there are only two such threefolds reaching this bound, namely the two
weighted projective spaces
\[
Y_1=\P(3,1,1,1)
\quad \text{and}
\quad
Y_2=\P(6,4,1,1).
\]
This implies that no $K3$ surface of genus $g>37$ is extendable
\cite[Cor.~1.6]{prokhorov}.

The threefold $Y_1$ has Picard group generated by $\O(1)$, and the
linear system $|\O(3)|$ embeds it in $\P^{10}$ as the cone over the
$3$-Veronese surface $v_3(\P^2) \subset \P^9$. One has $\omega_V \cong
\O(-6)$, so the anti-canonical model of $Y_1$ is the $2$-Veronese
re-immersion of $Y_1 \subset \P^{10}$ in $\P^{38}$, and the general $S
\in |-K_{Y_1}|$ is a $K3$ double cover of $\P^2$ branched over a sextic.
In turn, hyperplane sections of $S \subset \P^{37}$ are members of the
linear system $|6L|$, in our usual notation.

\begin{corollary}
\label{c:non-ext-Y1}
The anti-canonical embedding of
$Y_1=\P(3,1,1,1)$
is not extendable.
\end{corollary}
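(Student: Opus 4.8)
The plan is to deduce the non-extendability of the anticanonical $Y_1 \subset \P^{38}$ from the corank computation of Theorem~\ref{t:g2} in the case $k=6$, applied to a general curve section. The flag of linear sections $C \subset S \subset Y_1$---with $C$ a canonical curve of genus $37$ in $\P^{36}$ and $S$ a linearly normal $K3$ surface in $\P^{37}$---already displays $Y_1$ as a two-step extension of $C$; the point will be to show that two steps is the maximum for $C$, so that $Y_1$ admits no further extension.

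First I would identify a general curve section of $Y_1$ with a general member of $\C 6$. After completing the square, a general anticanonical section $S \in |-K_{Y_1}| = |\O(6)|$ is the sextic double plane $y^2 = g_6(x_0,x_1,x_2)$, where $y$ is the weight-$3$ coordinate of $\P(3,1,1,1)$; as $S$ varies, the branch sextic $g_6$ sweeps out all of $|\O_{\P^2}(6)|$, so the general $S$ is a general $K3$ double sextic plane, and its general hyperplane section $C \subset \P^{37}$ is a general member of $|6L|$, that is, a general curve of $\C 6$. By Theorem~\ref{t:g2} the locus $\{\cork(\Phi_C)=2\}$ contains the very general locus of $\C 6$ and, being open by semicontinuity of the rank of $\Phi_C$, is dense; since the curve sections of $Y_1$ dominate $\C 6$, a general one falls in this locus. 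Hence a general curve section $C$ of $Y_1$ has $\alpha(C)=\cork(\Phi_C)=2$.

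It then remains to invoke Lvovski's bound and compose linear sections. As $C\subset\P^{36}$ is smooth, irreducible, not contained in a hyperplane and not a quadric, with $\alpha(C)=2<36$, Lvovski's theorem (recalled in subsection~\ref{S:extensions}) shows $C$ is extendable at most two steps. Were $Y_1\subset\P^{38}$ extendable, there would be a non-cone $X\subset\P^{39}$ with $Y_1=X\cap H$ for a hyperplane $H\cong\P^{38}$; writing $C=Y_1\cap\Lambda$ with $\Lambda$ of codimension $2$ in $H$, we would get $C=X\cap\Lambda$ with $\Lambda$ of codimension $3$ in $\P^{39}$, exhibiting $X$ as a non-cone three-step extension of $C$ and contradicting the Lvovski bound. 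I expect the only delicate point to be the first step, namely guaranteeing that a general curve section of the fixed threefold $Y_1$ is general enough in $\C 6$ for the equality $\cork(\Phi_C)=2$ to hold and not merely the inequality $\cork(\Phi_C)\geq 2$; this is precisely what the surjectivity of the anticanonical sections onto all double sextic planes, together with semicontinuity, secures.
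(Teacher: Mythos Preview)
Your proof is correct and follows essentially the same route as the paper: take a general curve section $C$ of $Y_1$, read off $\alpha(C)=\cork(\Phi_C)=2$ from Theorem~\ref{t:g2} in the case $k=6$, and apply Lvovski's bound to conclude that $C$, hence $Y_1$, cannot be extended further. Your version is in fact more scrupulous than the paper's in justifying why a general curve section of $Y_1$ is general in $\C 6$ (via surjectivity onto double sextic planes and semicontinuity); one small slip is that semicontinuity makes $\{\cork(\Phi_C)\leq 2\}$ open rather than $\{\cork(\Phi_C)=2\}$, but this is harmless since only the inequality $\alpha(C)\leq 2$ is needed for Lvovski.
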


\begin{proof}
Let $C$ be a smooth curve linear section of $Y_1$. It is
non-degenerate in $\P^{36}$, and has
$\alpha(C)=2<36$ by Theorem~\ref{t:g2}. 
It thus follows from \cite[Thm.~0.1]{lvovskiII} that $C$ cannot be
extended more than $2$ steps, hence $Y_1$ cannot be extended.
\end{proof}

\begin{remark}
\label{r:ext-ell-sing}
Let $C \in \C 6$ be general. Corollary~\ref{c:non-ext-Y1} tells us in
particular 
that $Y_1$ is the universal extension of the canonical model of $C$.
Continuing Remark~\ref{rem:rat-ell-exts}, we point out that
there is a unique plane cubic $T$ passing through the 18 nodes of
$C^\flat$, and the corresponding rational extension $S_T$ is the unique
hyperplane section of $Y_1$ containing $C$ and passing through the
unique singular point of $Y_1$ (its vertex in the model as the cone
over $v_3(\P^2)$).
Eventually, note that $C\cap T$ set-theoretically consists of $18$
points, which is the maximum for $S_T$ to be smoothable to a $K3$
surface (see \cite{abs1} and the references therein for more details). 
\end{remark}

\paragraph{}
The threefold $Y_2$ on the other hand has Picard group generated by
$\O(12)$. It is singular along the line joining $P_0=(1:0:0:0)$ 
and $P_1=(0:1:0:0)$; along this line and off $P_0$, $Y_2$ looks
analytically locally
like a line times a surface
ordinary double point. The general $S \in |-K_{Y_2}|$ has an ordinary
double point at its intersection with the line $P_0P_1$, and has
Picard group generated by $\O(2)$.
Thus $S$ is a $K3$ surface of genus $37$ with $6$-divisible hyperplane
sections, as
are the anti-canonical sections of $Y_1$.
Still our results don't let us tell whether $Y_2$ is extendable or
not.\footnote
{After this article has been written, it has been proved
\cite[Cor.~6.4]{wps} that $Y_2$ is not extendable.}

\paragraph{}
\label{p:totaro}
For $k=4$ and $5$ respectively,
Theorem~\ref{t:g2} together with \cite[Thm.~2.1]{cds}
indicates the existence of arithmetically Gorenstein Fano
varieties of 
dimensions $8$ and $5$,
indices $6$ and $3$,
and genera $17$ and $26$ respectively.
These have been identified by Totaro (personal communication).
Both are sextic hypersurfaces in a weighted projective space.
The one for $k=4$ is defined by
\[
  0 = y^2 + x_0^2z_0 + x_0x_1z_1 + x_0x_2z_2
      + x_1^2z_3 + x_1x_2z_4 + x_2^2z_5,
\]
in $\P(4^6, 3, 1^3)$,
where the $x_i$'s have weight $1$, $y$ has weight $3$, and
the $z_i$'s have weight $4$;
the one for $k=5$ is defined by
\[
  0 = y^2 + x_0z_0 + x_1z_1 + x_2z_2,
\]
in $\P(5^3, 3, 1^3)$, 
where the $x_i$'s have weight $1$, $y$ has weight $3$, and
the $z_i$'s have weight $5$.

These extensions and similar universal extensions for complete
intersection $K3$ surfaces are described in detail in
\cite[\SS 3]{cd-highind}, see also
\cite{th-5iche}.

\section{Miscellaneous remarks on plane curves}
\label{S:severi}

\paragraph{Superabundant log-Severi varieties}
Fix a curve $B$ of degree $2d$, and let $\pi: S \to \P^2$ be the
double cover ramified over $B$. 
We consider the image 
$V_{B,k}$ in $|2kH|$ of the linear system
$|kL|$ on $S$. It has dimension
\[
h^0(S,kL)-1
= h(k) + h(k-d) -1,
\]
and parametrizes curves of geometric genus
\[
g_{k,d} 
=\textstyle \frac 1 2 (2k-1)(2k-2)-k(k-d)
\]
everywhere tangent to $B$, the number of contact points is thus
$2kd$.

The family of curves $V_{B,k}$ is therefore contained in the
log-Severi variety
$V _{g_{k,d}} ^{2kH}(0,[0,2kd])$ of the pair $(\P^2,B)$, 
in the notation of \cite{thomasCH}.
By definition, this log-Severi variety parametrizes plane curves of
degree $2k$ and geometric genus $g_{k,d}$, tangent to $B$ at $2kd$
unassigned points. 
The expected dimension of this log-Severi variety is
\begin{align*}
-(K_{\P^2}+B)\cdot 2kH+g_{k,d}-1+2kd
&= k(k+3-d),
\end{align*}
and by \cite[(1.4.0)]{thomasCH} a component of the Severi variety has
the expected dimension if 
it has an irreducible member and
\[
-K_{\P^2}\cdot 2kH - 2kd \geq 1
\quad \iff
\quad
2k(3-d) \geq 1.
\]
The latter inequality holds if and only if $d \leq 2$.

It turns out that the dimension of our family $V_{B,k}$ exceeds the
expected dimension of the log-Severi variety.
Indeed a direct computation shows that
\begin{align*}
\dim(V_{B,k})
- \expdim
\bigl(V _{g_{k,d}} ^{2kH}(0,[0,2kd])\bigr)
&=  \textstyle
\frac {(d-1)(d-2)} 2
\\
&= p_g(S)
\end{align*}
(\cf \cite[V.22 p.237]{bhpv} for the last equality).

\bigskip
The two following examples are applications of our main result, 
Theorem~\ref{t:main}, in the cases $n=2$ and $d=1,2$
respectively. 

\paragraph{Sections of quadric surfaces} 
A quadric in $\P^3$ is a double cover
of the plane branched over a conic. In this case we obtain 
for all $k\geq 1$ a correspondence between 
\\ a) smooth complete intersections of bi-degree $(2,k)$ in $\P^3$,
and
\\ b) plane curves of degree $2k$ with $k(k-1)$ nodes at a complete
intersection of bidegree $(k,k-1)$.

We remark that these curves are $k$-gonal, which is not immediately
apparent from the presentation b).


\paragraph{Sections of degree $2$ Del Pezzo surfaces}
A Del Pezzo surface of degree $2$ is a double cover 
$\pi:S \to \P^2$
branched over a quartic. It is also isomorphic to the blow-up of
$\P^2$ at seven points in general position, and one has
$\pi^* H \simeq -K_S$.
We thus obtain for all $k \geq 2$ a correspondence between 
\\ a) plane curves of degree $3k$ with seven $k$-fold points in general
position, and
\\ b) plane curves of degree $2k$ with $k(k-2)$ nodes at a complete
intersection of bi-degree $(k,k-2)$.

We observe that the above curves are Cremona-minimal, \ie
it is not possible to lower their degrees by applying Cremona
transformations.
It thus happens that a curve of type a) is birational to a curve of
type b), but the birational isomorphism between them may not be
realized by a Cremona transformation of the projective plane.

\section{$K3$ surfaces in Fano double projective spaces}
\label{S:double-Fano}

In this section we apply our main result Theorem~\ref{t:main} to
study the extensions of those $K3$ surfaces that are anti-canonical
divisors in Fano solids double covers of the projective space
$\P^3$. 

Let $\pi: V \to\P^3$ be a double cover branched over a hypersurface
$B$ of degree $2d$. 
We have
$K_V=(d-4)L$, hence $V$ is Fano if and only if $d<4$.
Theorem~\ref{t:main} is relevant only for sections of $|kL|$
with $k\geq d$, which leaves only the two cases $d=1,2$ since we 
take $k=4-d$.
For $d=3$, members of $|-K_V|$ are sextic double planes and these have
already been considered in Section~\ref{S:K3-g2}.

\paragraph{}
When $d=1$, $V$ is a double cover of $\P^3$ branched over a quadric, 
and this is a quadric hypersurface in $\P^4$.
Then $K_V=-3L$ and anticanonical divisors of $V$ are complete
intersections of type $(2,3)$ in $\P^4$, which is the general form of
genus $4$ $K3$ surfaces.

The general $S \in |-K_V|$ is mapped birationally by $\pi$ to a
sextic surface, double along a complete intersection of type
$(3,2)$. 
Conversely, every such sextic may be unprojected to a cubic section of
a quadric in $\P^4$, in a unique way by Theorem~\ref{t:main}.

\paragraph{}
The case $d=2$ is more interesting.
Then $V$ is a double cover of $\P^3$ branched over a quartic surface,
$K_V=-2L$, and
the general $S \in |-K_V|$
is mapped birationally by $\pi$ to a smooth quartic surface.
In its anticanonical embedding $V$ is a quadric section of the cone in
$\P^{10}$ over the Veronese variety $v_2(\P^3) \subset
\P^9$. Hyperplane sections of $S$ in its embedding in $\P^9$ are
$2$-Veronese re-immersions of quadric sections of $S$ in $\P^3$.
In this case we obtain the following results.

\begin{theorem}
\label{t:double-fano}
For general $C$ in the image of $c^2_9$,
the fibre of $c_9 ^2$ over $C$ has
dimension $\cork(\Phi_C)-1+\nu_2(C)$
with
\[
\cork(\Phi_C)=10
\quad \text{and}
\quad
\nu_2(C)=h^0(N_{C/\P^8}(-2))=1.
\]
For $(S,C)$ in this fibre, the family of pairs $(S',C)$ such that the
ribbons $2C_S$ and $2C_{S'}$ are isomorphic has dimension $\nu_2(C)$.

Moreover, the fibre of $s_9^2: \FS_9^2 \to \K_9^2$ over $S$ has
dimension $h^1(T_S(-1))-1+\nu_2(S)$
with
\[
h^1(T_S(-1)) = 10
\quad \text{and}
\quad
\nu_2(S)=h^0(N_{S/\P^9}(-2))=1.
\]
There is one $(V,S)$ in this fibre such that the ribbon $2S_V$ is
trivial. 
\end{theorem}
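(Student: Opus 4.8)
The plan is to run the two forgetful maps $c_9^2$ and $s_9^2$ in parallel, computing the dimensions of their fibres directly from the double cover geometry of Section~\ref{S:double-covers} and matching the result against the cohomological invariants. We are in the case $n=3$, $d=2$, $k=2$: here $C$ is a smooth complete intersection of type $(2,4)$ in $\P^3$, of genus $9$, lying on the unique quadric $Q$ through it, and adjunction gives $K_C = 2H|_C$, so $C\subset\P^8$ is a canonical curve of Clifford index $2$. Because $\Cliff(C)=2$ and $g=9<11$, the theory of \cite{cds} does not apply, and the point is precisely to obtain the fibre dimensions geometrically and only afterwards read off the coranks. I shall take the values $\cork(\Phi_C)=h^1(T_S(-1))=10$ and $\nu_2(C)=\nu_2(S)=1$ from Proposition~\ref{pr:cohom}, where they are computed by hand.

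First I would compute the two fibre dimensions. A surface $(S,L)\in\K_9^2$ carrying $C$ as a hyperplane section is a smooth quartic $S^\flat\subset\P^3$ with $C=Q\cap S^\flat$; from the Koszul resolution $0\to\O(-6)\to\O(-2)\oplus\O(-4)\to\I_C\to0$ one gets $h^0(\I_C(4))=11$, so these quartics form a $\P^{10}$ and, after quotienting by the finite stabiliser of $C$, the fibre of $c_9^2$ over a general $C$ has dimension $10$. On the other side, a Fano $V$ with a fixed $S\in|{-K_V}|$ is exactly a double cover of $\P^3$ branched along a quartic everywhere tangent to $S^\flat=\pi(S)$ in the sense of Proposition~\ref{p:descr}; by Theorem~\ref{t:main} (constructed in Proposition~\ref{pr:everywhere-tg}) these form a family of dimension $h^0(\P^3,\O(2))=10$, so the fibre of $s_9^2$ over a general $S$ again has dimension $10$. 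Together with Proposition~\ref{pr:cohom} this yields the asserted equalities $\dim(c_9^2)^{-1}(C)=10=\cork(\Phi_C)-1+\nu_2(C)$ and $\dim(s_9^2)^{-1}(S)=10=h^1(T_S(-1))-1+\nu_2(S)$.

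The substance of the statement is the ribbon analysis, which I would carry out by computing ideals. For a quartic $S'=\{F'=0\}$ through $C$ one has $\I_C=(Q,F')$, whence $\I_{C/S'}^2=(Q^2)$ and the first neighbourhood of $C$ in $S'$, viewed in $\P^3$, has ideal $(F')+(Q^2)=(F',Q^2)$; thus $2C_{S'}$ depends on $F'$ only modulo $Q^2$, so the surfaces inducing a fixed ribbon are exactly the pencil $\vect{S',Q^2}$, a family of projective dimension $1=\nu_2(C)$, matching $h^0(N_{C/\P^8}(-2))=1$ (the higher twists $h^0(N_{C/\P^8}(-l))$, $l\geq3$, vanishing). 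For the trivial-ribbon assertion on the Fano side I would exhibit the distinguished member: take for the branch divisor $S^\flat$ itself, so that the double cover $V_0\colon y^2=g$ (with $g$ the equation of $S^\flat$) has ramification divisor $R=\{y=0\}$ mapping isomorphically onto $S^\flat$, giving $S\cong R\in|{-K_{V_0}}|$. Here $\I_{R/V_0}=(y)$ and $\I_{R/V_0}^2=(y^2)=(g)$, so $\O_{2R_{V_0}}=\O_{V_0}/(y^2)=\O_R[y]/(y^2)$ is split and $2S_{V_0}$ is the trivial ribbon.

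It remains to see that $V_0$ is the \emph{only} such member. Writing a general $V$ in the fibre as $y^2=f_2^2-g$ with $S=\{y=f_2\}$ in the notation of \ref{p:wps}, and introducing the normal coordinate $u=y-f_2$, one finds $\I_{2S_V}=(g+2f_2u,\ u^2)$, so the extension class of $2S_V$ in $\Ext^1(\Omega^1_S,\O_S(-1))=H^1(T_S(-1))$ is the image of $f_2$ under a linear map whose only source of non-triviality is the coupling $2f_2u$. Since $h^1(T_S(-1))=10=h^0(\P^3,\O(2))$ by Proposition~\ref{pr:cohom}, this map is an isomorphism, so the class vanishes exactly for $f_2=0$, i.e.\ for $B=S^\flat$ and $S=R$, recovering $V_0$; projectively its fibres are the scaling lines, of dimension $\nu_2(S)=1$. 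The main obstacle throughout is exactly this ribbon bookkeeping: since $\Cliff(C)=2$ the integral of a ribbon is no longer unique, so one cannot invoke \cite{cds} and must instead verify by the explicit ideal computations above both that the ambiguity is measured by $\nu_2$ and that the ramification integral $V_0$ is isolated.
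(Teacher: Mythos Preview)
Your proof is correct and follows essentially the same line as the paper's: compute the fibre of $c_9^2$ as the $\P^{10}$ of quartics through $C$ (the paper phrases this as $h^0(\I_{kC}(4))-1$ for $k=1,2$, using that $kC$ is a complete intersection of type $(2k,4)$), the ribbon-fixed sublocus as the pencil $\vect{S',Q^2}$, the fibre of $s_9^2$ via Theorem~\ref{t:main}, and exhibit the trivial-ribbon Fano as the double cover branched over $S^\flat$ itself (the paper cites \cite{beauville-merindol} for the triviality rather than computing the ideal), deferring the cohomological values to Proposition~\ref{pr:cohom}. Your final paragraph on uniqueness of $V_0$ goes beyond what the theorem asserts and what the paper proves (``there is one'' here means existence only); note also that equal dimensions alone do not force your map $f_2\mapsto[\text{extension class}]$ to be an isomorphism, so that step would need more if you did want uniqueness.
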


\begin{proof}[Proof of Theorem~\ref{t:double-fano}]%
Let $C$ be a general member of the image of $c_9^2$.
There exist a quadric $Q$ and a quartic $S$ in
$\P^3$ such that $C=Q\cap S$.
One has $\omega_C = \restr {\O_{\P^3}(2)} C$,
and the canonical model of $C$ is its $2$-Veronese re-immersion in
$\P^8$. 
General $K3$ extensions of the canonical model of $C$ are $2$-Veronese
re-immersion in $\P^8$ of smooth quartics in $\P^3$.

Let $1C=C$ and $2C$ be the ribbon of $C$ in $S$. The latter ribbon is cut
out on $S$ by the square of an equation of $Q$. We thus have that for
all $k=1,2$, $kC$ is a complete intersection of type $(2k,4)$ in
$\P^3$. It follows that
\[
h^0(\P^3,\I_{kC}(4))-1 = h(0)+h(4-2k)-1
=
\begin{cases}
10 & \text{if } k=1 \\
1 & \text{if } k=2,
\end{cases}
\]
which proves that extensions of $C$ form a $10$-dimensional family, in
which those $S'$ such that $2C_{S'}=2C$ form a $1$-dimensional
subfamily. 

In turn the general extension of $S \subset \P^9$ is the
anti-canonical model of a quartic double $\P^3$. 
These form a $10$-dimensional family by Theorem~\ref{t:main}. 
A particular member is the double cover branched over $S$ itself, in
which the ribbon of $S$ is trivial, see
\cite[(2) p.875]{beauville-merindol}.

It only remains to prove the assertions about 
$\alpha(X)$ and $\nu_2(X)$ for $X=C,S$. This is the object of
Proposition~\ref{pr:cohom} (note that as $V \subset \P^{10}$ is a
quadric section of the cone over $v_2(\P^3)$, its hyperplane section
$S\subset \P^9$ is a quadric section of $v_2(\P^3)$).
\end{proof}

\begin{proposition}
\label{pr:cohom}
Let $Y =v_2(\P^3) \subset \P^9$ be the $2$-Veronese embedding of
$\P^3$, $S$ a smooth quadric section of $Y$, and $C$ a smooth
hyperplane section of $S$. We have:\\
\begin{inparaenum}[{\normalfont (\roman{enumi})}]
\item $h^0(N_{Y/\P^9}(-1))=10$ and
$h^0(N_{Y/\P^9}(-k))=0$ for all $k\geq 2$;
\\ \item $h^0(N_{S/\P^9}(-1))=10+10$,
$h^0(N_{S/\P^9}(-2))=1$,
and $h^0(N_{S/\P^9}(-k))=0$ for all $k\geq 3$;
\\ \item $h^0(N_{C/\P^8}(-1))=10+9$,
$h^0(N_{C/\P^8}(-2))=1$,
and $h^0(N_{C/\P^8}(-k))=0$ for all $k\geq 3$.
\end{inparaenum}
\end{proposition}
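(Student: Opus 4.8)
The plan is to reduce the whole computation to line-bundle cohomology on $\P^3$ and on the quartic $K3$ surface underlying $S$, by means of two structural isomorphisms. The first is the classical identification $N_{Y/\P^9}\cong\Sym^2 T_{\P^3}$: writing $\P^3=\P(V)$, the affine cone over $Y$ is the locus of squares $v^2\in\Sym^2 V$, with tangent space $v\cdot V$ at $v^2$, so the projective normal fibre is $\langle v^2\rangle^\vee\otimes\Sym^2(V/\langle v\rangle)$; since $V/\langle v\rangle$ is the fibre of $T_{\P^3}\otimes\langle v\rangle$, the factors $\langle v\rangle^{\pm2}$ cancel and one gets $\Sym^2 T_{\P^3}$. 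The symmetric square of the Euler sequence then furnishes the resolution
\[
0\to\O_{\P^3}(1)^{\oplus4}\to\O_{\P^3}(2)^{\oplus10}\to\Sym^2 T_{\P^3}\to0,
\]
and, as $\O_{\P^9}(1)|_{Y}=\O_{\P^3}(2)$, one has $N_{Y/\P^9}(-k)=\Sym^2 T_{\P^3}(-2k)$. Twisting the resolution by $\O_{\P^3}(-2k)$ and reading off $H^0$ yields (i) at once; the same resolution gives $H^1(N_{Y/\P^9}(-k))=0$ for all $k$, while Serre duality identifies $H^2(N_{Y/\P^9}(-k))$ with $H^1(\Sym^2\Omega^1_{\P^3}(2k-4))^\vee$.

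For (ii) I would use the filtration of $N_{S/\P^9}$ coming from $S\subset Y$. Since $S$ is a quadric section of $Y$ one has $N_{S/Y}=\O_S(2)$, whence $0\to\O_S(2)\to N_{S/\P^9}\to N_{Y/\P^9}|_{S}\to0$; because the quartic $K3$ surface has no intermediate cohomology in any twist, $H^1(\O_S(2-k))=0$ and the sequence stays exact on global sections, so $h^0(N_{S/\P^9}(-k))=h^0(\O_S(2-k))+h^0(N_{Y/\P^9}|_{S}(-k))$. The first summand is $10,1,0$ for $k=1,2,\ge3$. The second is obtained by restricting the Veronese normal bundle along the quadric, via $0\to N_{Y/\P^9}(-k-2)\to N_{Y/\P^9}(-k)\to N_{Y/\P^9}|_{S}(-k)\to0$ together with the first-step data (the values of $h^0(N_{Y/\P^9}(-j))$ and the vanishing $H^1(N_{Y/\P^9}(-j))=0$), giving $10,0,0$. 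The sums $20=10+10$, $1$, and $0$ are the asserted values.

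For (iii) I would first invoke the second structural isomorphism: as $C$ is a transverse hyperplane section of the smooth surface $S$, one has $N_{C/\P^8}\cong N_{S/\P^9}|_{C}$. It then suffices to restrict $N_{S/\P^9}$ along the divisor $C\in|\O_S(1)|$, through $0\to N_{S/\P^9}(-k-1)\to N_{S/\P^9}(-k)\to N_{S/\P^9}|_{C}(-k)\to0$. Feeding in the second-step values of $h^0(N_{S/\P^9}(-j))$ and the vanishing $H^1(N_{S/\P^9}(-j))=0$ for $j\ge2$, the long exact sequence yields $h^0(N_{C/\P^8}(-1))=20-1=19$ (the $10+9$ of the statement, obtained by restricting the two summands of $N_{S/\P^9}$, the polarisation piece $\O_S(1)$ contributing $h^0(\omega_C)=9$ since $\O_C(1)=\omega_C$), then $h^0(N_{C/\P^8}(-2))=1$ and $0$ for $k\ge3$. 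The required vanishing $H^1(N_{S/\P^9}(-j))=0$ for $j\ge2$ comes from the first step: chasing the two sequences of the second step shows that $H^1(N_{S/\P^9}(-j))$ injects into $H^2(N_{Y/\P^9}(-j-2))$, so it is governed by the groups $H^2(N_{Y/\P^9}(-m))$ with $m\ge4$.

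The one genuinely non-formal input, and the step I expect to be the main obstacle, is exactly the vanishing $H^2(N_{Y/\P^9}(-m))=0$ for $m\ge4$. By the first paragraph this equals $H^1(\Sym^2\Omega^1_{\P^3}(2m-4))^\vee$, which the dual of the Euler-symmetric-square resolution presents as the cokernel of a contraction map $\Sym^2 V^\vee\otimes H^0(\O_{\P^3}(2m-6))\to V^\vee\otimes H^0(\O_{\P^3}(2m-5))$. One must show this contraction is surjective for $m\ge4$, equivalently $H^1(\Sym^2\Omega^1_{\P^3}(t))=0$ for $t\ge4$; this follows from Bott's formula, or from an Euler-characteristic count exploiting that $H^0$ and $H^1$ of $\Sym^2 T_{\P^3}(-2m)$ already vanish for $m\ge2$. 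The analogous group is nonzero for $t=2$, so the bound $m\ge4$ is the sharp one. It is precisely this surjectivity that upgrades the evident inequalities into exact equalities and, in particular, isolates the single nonzero invariant $\nu_2=h^0(N(-2))=1$ on both $S$ and $C$.
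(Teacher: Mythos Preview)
Your argument is correct and follows essentially the same path as the paper: the resolution you write for $N_{Y/\P^9}\cong\Sym^2 T_{\P^3}$ is exactly the sequence the paper takes from \cite{alzati-re}, and from there the two normal-bundle filtrations and the restriction sequences you use for $S$ and for $C$ are literally the same as in the paper (including the identification $N_{C/\P^8}\cong N_{S/\P^9}|_C$).

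The only substantive difference is in the justification of $H^2(N_{Y/\P^9}(-m))=0$ for $m\geq 4$: the paper quotes \cite[Thm.~2]{alzati-re}, whereas you propose Serre duality together with Bott or an Euler-characteristic count. A small caveat here: the Euler characteristic alone only yields $h^2-h^3$, so it does not suffice by itself as stated; and the vanishing you need concerns $\Sym^2\Omega^1_{\P^3}$, which lies outside the classical Bott formula for $\Omega^p$ and requires Borel--Weil--Bott for homogeneous bundles. Alternatively one checks directly that the contraction $\Sym^2 V^\vee\otimes\Sym^{t-2}V^\vee\to V^\vee\otimes\Sym^{t-1}V^\vee$ is surjective for $t\geq 4$ (for $t=2$ it is the inclusion $\Sym^2 V^\vee\hookrightarrow V^\vee\otimes V^\vee$ with cokernel $\Lambda^2 V^\vee$, recovering the value $h^2=6$ at $m=3$ that the paper records).
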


\begin{proof}
We follow \cite{alzati-re} to compute the cohomology of 
$N_{Y/\P^9}$.
Let $E$ be the vector space underlying $\P^3$. 
For all $k$ we have an exact sequence 
\begin{equation*}
0 \to
E \otimes \O_{\P^3} (1-2k) 
\to S^2E \otimes \O_{\P^3} (2-2k)
\to N _{Y/\P^9}(-k)
\to 0
\end{equation*}
(beware that in \cite{alzati-re}
$N_Y(1)$ means $N_Y \otimes \O_{\P^3}(1)$, 
whereas we take it as
$N_Y \otimes \O_{\P^9}(1)= N_Y \otimes \O_{\P^3}(2)$).
This gives the dimensions in (i), as well as the vanishing
of $H^1 (N_Y(-k))$ for all $k \in \ZZ$,
and of $H^2 (N_Y(-k))$ for all $k \leq 2$.
Moreover, by \cite[Thm.~2]{alzati-re}
$H^2 (N_Y(-k))$ also vanishes for all $k\geq 4$,
and has dimension $6$ for $k=3$
(to apply \cite[Thm.~2]{alzati-re} to our situation, one should
substitute their $U$, $T$, and $k$ to our $E$, $\{0\}$, and $2k$
respectively, so that 
for us their $S^\chi \otimes T$ is always $\{0\}$ and
$\mu^{-1}(T)$ is the kernel of the multiplication map
$E \otimes E \to S^2E$;
also note that there is a misprint in the statement of
\cite[Thm.~2]{alzati-re}, as the assumption ``$k \geq d-n-1$'' should
read ``$k \geq d+n+1$'').

We start by computing the cohomology of $N_{S/\P^9}$.
We shall use the exact sequence 
\begin{equation}
\label{normalS}\tag{$\star$}
0 \to 
N_{S/Y}=\O_S(2) \to
N_{S/\P^9} \to
\restr {N_{Y/\P^9}} S
\to 0.
\end{equation}
To compute the cohomology of $\restr {N_{Y/\P^9}} S$ we use the
restriction exact sequence
\begin{equation*}
0 \to 
N_{Y/\P^9}(-2)
\to N_{Y/\P^9}
\to \restr {N_{Y/\P^9}} S
\to 0.
\end{equation*}
Since $H^1(N_{Y/\P^9}(-k))=0$ for all $k$,
we find that
\[
  h^0(\restr {N_{Y/\P^9}(-k)} S)
  = h^0(N_{Y/\P^9}(-k))
  - h^0(N_{Y/\P^9}(-k-2))
  = \begin{cases}
    10 & \text{if } k=1 \\
    0 & \text{if } k\geq 2.
  \end{cases}
\]
Moreover,
since $H^1(N_{Y/\P^9}(-k))=0$ for all $k$ and
$H^2(N_{Y/\P^9}(-k-2))=0$ for all $k \neq 1$, as we have seen in the
first paragraph of the proof, we find that
$H^1(\restr {N_{Y/\P^9}(-k)} S) =0$ for all $k\neq 1$.

We may now return to the exact sequence \eqref{normalS}. Using the
fact $H^1(\O_S(k))$ vanishes for all $k$, we find that
\[
  h^0(N_{S/\P^9}(-k))
  = h^0(\restr {N_{Y/\P^9}(-k)} S)
  + h^0(\O_S(-k+2)),
\]
which gives the dimensions in part (ii) of the Proposition.
Using in addition the vanishing of
$H^1(\restr {N_{Y/\P^9}(-k)} S) =0$ for $k\neq 1$, we find that
$H^1(N_{S/\P^9}(-k))$ vanishes for all $k \neq 1$.

Next we compute the cohomology of $N_{C/\P^8}$ in a similar
fashion. We consider the exact sequence 
\begin{equation*}
0 \to 
N_{C/S} = \O_C(1) \to
N_{C/\P^9} = N_{C/\P^8} \oplus \O_C(1) \to
\restr {N_{S/\P^9}} C
\to 0
\end{equation*}
which gives an isomorphism
$N_{C/\P^8}
\cong \restr {N_{S/\P^9}} C$.
Then we consider the restriction exact sequence
\begin{equation*}
0 \to 
N_{S/\P^9}(-1)
\to N_{S/\P^9}
\to \restr {N_{S/\P^9}} C = N_{C/\P^8}
\to 0.
\end{equation*}
Using the vanishing of
$H^1(N_{S/\P^9}(-k))$ for all $k \neq 1$, we obtain
for all $k \geq 1$
\[
  h^0(N_{C/\P^8}(-k))
  = h^0 (N_{S/\P^9}(-k)) - h^0 (N_{S/\P^9}(-k-1)),
\]
which gives the dimensions in part (iii) of the
Proposition.
\end{proof}

\paragraph{Remark}
It has been proved by Wahl \cite[Thm.~4.8]{wahl90} that a curve of
genus $g\geq 5$ sitting on a 
rational ruled surface has $\cork (\Phi_C) \geq 9$.
This corresponds in general to the existence of rational surface
extensions, see \cite[\SS 9]{cds}. For curves $C$ as in
Theorem~\ref{t:double-fano}, there are in addition $K3$ extensions.
This would imply that $\cork(\Phi_C)>9$ as in Remark~\ref{r:ext-ell-sing},
should the integral of a ribbon be unique.
This is not the case because $\nu_2(C)=1$, but the corank
nevertheless jumps, as we have observed above.

On the other hand, it has been proved by Brawner \cite{brawner} that a
general tetragonal curve of 
genus $g \geq 7$ has Wahl map of corank $9$. 
Curves as in Theorem~\ref{t:double-fano} have two $g^1_4$'s, and thus
provide examples of tetragonal curves with Wahl map of exceptional
corank.

\begin{closing}

\begin{thebibliography}{10}

    \setlength{\itemsep}{1mm}
    \setlength{\parskip}{0em}
    
\bibitem{alzati-re}
A.~Alzati {\normalfont \smfandname} R.~Re, \emph{Cohomology of normal bundles
  of special rational varieties}, Comm. Algebra \textbf{48} (2020), no.~6,
  2492--2516.

\bibitem{abs1}
E.~Arbarello, A.~Bruno {\normalfont \smfandname} E.~Sernesi, \emph{On
  hyperplane sections of {K}3 surfaces}, Algebr. Geom. \textbf{4} (2017),
  no.~5, 562--596.

\bibitem{bhpv}
W.~P. {Barth}, K.~{Hulek}, C.~A.~M. {Peters} {\normalfont \smfandname} A.~{Van
  de Ven}, \emph{{Compact complex surfaces}}, 2nd enlarged \smfedname, {Ergeb.
  Math. Grenzgeb., 3. Folge}, vol.~4, Berlin: Springer, 2004.

\bibitem{beauville-merindol}
A.~Beauville {\normalfont \smfandname} J.-Y. M{\'e}rindol, \emph{Sections
  hyperplanes des surfaces {$K3$}}, Duke Math. J. \textbf{55} (1987), no.~4,
  873--878.

\bibitem{brawner}
J.~N. Brawner, \emph{The {G}aussian map {$\Phi_K$} for curves with special
  linear series}, ProQuest LLC, Ann Arbor, MI, 1992, Thesis (Ph.D.)--The
  University of North Carolina at Chapel Hill.

\bibitem{cd-highind}
C.~{Ciliberto} {\normalfont \smfandname} T.~{Dedieu}, \emph{{$K3$} curves with
  index {$k>1$}}, prepublication arXiv:2012.10642, to appear in Boll. Unione
  Mat. Ital.

\bibitem{cds}
C.~{Ciliberto}, T.~{Dedieu} {\normalfont \smfandname} E.~{Sernesi}, \emph{{Wahl
  maps and extensions of canonical curves and \(K3\) surfaces}}, {J. Reine
  Angew. Math.} \textbf{761} (2020), 219--245.

\bibitem{CLM98}
C.~Ciliberto, A.~F. Lopez {\normalfont \smfandname} R.~Miranda,
  \emph{Classification of varieties with canonical curve section via {G}aussian
  maps on canonical curves}, Amer. J. Math. \textbf{120} (1998), no.~1, 1--21.

\bibitem{cm92}
C.~Ciliberto {\normalfont \smfandname} R.~Miranda, \emph{Gaussian maps for
  certain families of canonical curves}, in \emph{Complex projective geometry
  ({T}rieste, 1989/{B}ergen, 1989)}, London Math. Soc. Lecture Note Ser.,
  vol.~179, Cambridge Univ. Press, Cambridge, 1992, 106--127.

\bibitem{th-5iche}
T.~Dedieu, \emph{Extendability of canonical models of plane quintics}, appendix
to {\itshape On the extendability of projective varieties: a survey}
by A.~F.~Lopez,
  to appear in {\itshape The art of doing Algebraic Geometry}, Springer.

\bibitem{thomasCH}
\bysame , \emph{Geometry of logarithmic severi varieties at a general point},
  to appear in the proceedings of the Seminar on Degenerations and enumeration
  of curves on surfaces, held in Roma {``Tor Vergata''} 2015--2017.

\bibitem{wps}
T.~Dedieu {\normalfont \smfandname} E.~Sernesi, \emph{Deformations and
  extensions of gorenstein weighted projective spaces}, to appear in
{\itshape The art of
  doing Algebraic Geometry}, Springer.

\bibitem{donagi-morrison}
R.~Donagi {\normalfont \smfandname} D.~R. Morrison, \emph{Linear systems on
  {$K3$}-sections}, J. Differential Geom. \textbf{29} (1989), no.~1, 49--64.

\bibitem{eisenbud}
D.~{Eisenbud}, \emph{{Commutative Algebra
with a View Toward Algebraic Geometry
}}, {Grad. Texts Math.}, vol.~150, Springer, 1995.

\bibitem{eisenbud-syz}
D.~{Eisenbud}, \emph{{The geometry of syzygies. A second course in commutative
  algebra and algebraic geometry}}, {Grad. Texts Math.}, vol.~229, New York,
  NY: Springer, 2005.

\bibitem{green-lazarsfeld}
M.~Green {\normalfont \smfandname} R.~Lazarsfeld, \emph{Special divisors on
  curves on a {$K3$} surface}, Invent. Math. \textbf{89} (1987), no.~2,
  357--370.

\bibitem{guardo-vantuyl}
E.~{Guardo} {\normalfont \smfandname} A.~{Van Tuyl}, \emph{{Powers of complete
  intersections: Graded Betti numbers and applications.}}, {Ill. J. Math.}
  \textbf{49} (2005), no.~1, 265--279.

\bibitem{harris}
J.~{Harris}, \emph{{Algebraic geometry. A first course.}}, {Grad. Texts Math.},
  vol.~133, Berlin etc.: Springer-Verlag, 1992.

\bibitem{knutsen18}
A.~L. Knutsen, \emph{Global sections of twisted normal bundles of {$K3$}
  surfaces and their hyperplane sections}, Atti Accad. Naz. Lincei Rend. Lincei
  Mat. Appl. \textbf{31} (2020), no.~1, 57--79.

\bibitem{lvovskiII}
S.~Lvovski, \emph{Extensions of projective varieties and deformations. {II}},
  Michigan Math. J. \textbf{39} (1992), no.~1, 41--51, 65--70.

\bibitem{prokhorov}
Y.~G. Prokhorov, \emph{The degree of {F}ano threefolds with canonical
  {G}orenstein singularities}, Mat. Sb. \textbf{196} (2005), no.~1, 81--122.

\bibitem{totaro0}
B.~Totaro, \emph{Bott vanishing for algebraic surfaces}, Trans. Amer. Math.
  Soc. \textbf{373} (2020), no.~5, 3609--3626.

\bibitem{wahl90}
J.~Wahl, \emph{Gaussian maps on algebraic curves}, J. Differential Geom.
  \textbf{32} (1990), no.~1, 77--98.

\bibitem{zariski-samuel}
  O.~Zariski {\normalfont \smfandname} P.~Samuel,
  \emph{Commutative Algebra, Volume II},
  {Grad. Texts Math.}, vol.~29, Springer, 1960.

\end{thebibliography}
\providecommand{\bysame}{\leavevmode\hbox to3em{\hrulefill}\thinspace}
\providecommand{\og}{``}
\providecommand{\fg}{''}
\providecommand{\smfandname}{and}
\providecommand{\smfedsname}{eds.}
\providecommand{\smfedname}{ed.}
\providecommand{\smfmastersthesisname}{M{\'e}moire}
\providecommand{\smfphdthesisname}{Th{\`e}se}

\medskip\noindent
Ciro Ciliberto.
Dipartimento di Matematica.
Universit{\`a} degli Studi di Roma Tor Vergata.
Via della Ricerca Scientifica,
00133 Roma, Italy.
\texttt{cilibert@mat.uniroma2.it}

\medskip\noindent
Thomas Dedieu.
Institut de Math{\'e}matiques de Toulouse~; UMR5219.
Universit{\'e} de Toulouse~; CNRS.
UPS IMT, F-31062 Toulouse Cedex 9, France.
\texttt{thomas.dedieu@math.univ-toulouse.fr}

\renewcommand{\thefootnote}{}
\footnotetext
{CC is a member of GNSAGA of INdAM. He
acknowledges the MIUR Excellence Department Project awarded to the
Department of Mathematics, University of Rome Tor Vergata, CUP
E83C18000100006.}
\end{closing}

\end{document}